\documentclass[12pt,a4paper]{amsart}

\usepackage[hmargin=2cm,vmargin=2.5cm]{geometry}

\usepackage{amsmath,amssymb}
\usepackage{tikz}

\def\mcM{{\mathcal{M}}}
\def\mcY{{\mathcal Y}}
\def\mfM{\mathfrak M}

\def\Z{{\mathbb Z}}
\def\mbZ{{\mathbb Z}}
\def\mbC{{\mathbb C}}
\def\mbC{{\mathbb C}}
\def\mbL{{\mathbb L}}

\DeclareMathOperator{\Hom}{Hom}
\DeclareMathOperator{\im}{Im}

\def\mmod{{\text{ mod }}}

\newtheorem{proposition}{Proposition}[section]
\newtheorem{theorem}[proposition]{Theorem}

\newtheorem{lemma}[proposition]{Lemma}

\newtheorem{remark}[proposition]{Remark}

\title[Quasihomogeneous Hilbert schemes]{Generating series of the Poincar\'e polynomials of quasihomogeneous Hilbert schemes}

\author{A. Buryak} 

\address{A.~Buryak:\newline
Department of Mathematics,
University of Amsterdam, \newline
P.~O.~Box 94248, 1090 GE Amsterdam, 
The Netherlands\newline 
\indent and\newline
Department of Mathematics, Moscow State University,\newline
Leninskie gory, 119992 GSP-2 Moscow, Russia} 
\email{a.y.buryak@uva.nl, buryaksh@mail.ru}

\author{B. L. Feigin}

\address{B.~L.~Feigin:\newline
National research university Higher school of Economics, Russia, Moscow, 101000, Myasnitskaya ul., 20, \newline
Landau Institute for Theoretical Physics, Russia, Chernogolovka, 142432, prosp. Akademika Semenova, 1a, and \newline
Independent University of Moscow, Russia, Moscow, 119002, Bolshoy Vlasyevskiy per., 11.}
\email{borfeigin@gmail.com}

\unitlength=1.0mm

\begin{document}

\begin{abstract}
In this paper we prove that the generating series of the Poincar\'e polynomials of quasihomogeneous Hilbert schemes of points in the plane has a beautiful decomposition into an infinite product. We also compute the generating series of the numbers of quasihomogeneous components in a moduli space of sheaves on the projective plane. The answer is given in terms of characters of the affine Lie algebra~$\widehat{sl}_m$.
\end{abstract}

\maketitle

\section{Introduction}

The Hilbert scheme $(\mbC^2)^{[n]}$ of $n$ points in the plane $\mbC^2$ parametrizes ideals $I\subset\mbC[x,y]$ of colength $n$: $\dim_{\mbC}\mbC[x, y]/I=n$. There is an open dense subset of $(\mbC^2)^{[n]}$, that parametrizes the ideals, associated with configurations of $n$ distinct points. The Hilbert scheme of $n$ points in the plane is a nonsingular, irreducible, quasiprojective algebraic variety of dimension $2n$ with a rich and much studied geometry, see \cite{Gottsche,Nakajima} for an introduction.

The cohomology groups of $(\mbC^2)^{[n]}$ were computed in \cite{Ellingsrud} and we refer the reader to the papers \cite{Costello,Lehn1,Lehn2,Li,Vasserot} for the description of the ring structure in the cohomology $H^*((\mbC^2)^{[n]})$. 

There is a $(\mbC^*)^2$-action on $(\mbC^2)^{[n]}$ that plays a central role in this subject. The algebraic torus $(\mbC^*)^2$ acts on $\mbC^2$ by scaling the coordinates, $(t_1,t_2)\cdot(x,y)=(t_1x,t_2y)$. This action lifts to the $(\mbC^*)^2$-action on the Hilbert scheme $(\mbC^2)^{[n]}$.

Let $T_{\alpha,\beta}=\{(t^\alpha,t^\beta)\in(\mbC^*)^2|t\in\mbC^*\}$, where $\alpha,\beta\ge 1$ and $gcd(\alpha,\beta)=1$, be a one dimensional subtorus of $(\mbC^*)^2$. The variety $\left((\mbC^2)^{[n]}\right)^{T_{\alpha,\beta}}$ parametrizes quasihomogeneous ideals of colength $n$ in the ring $\mbC[x,y]$. Irreducible components of $\left((\mbC^2)^{[n]}\right)^{T_{\alpha,\beta}}$ were described in \cite{Evain}. Poincar\'e polynomials of irreducible components in the case $\alpha=1$ were computed in~\cite{Buryak}. For $\alpha=\beta=1$ it was done in~\cite{Iarrobino}.

For a manifold $X$ let $H_*(X)$ denote the homology group of $X$ with rational coefficients. Let $P_q(X)=\sum_{i\ge 0}\dim H_i(X)q^{\frac{i}{2}}$. The main result of this paper is the following theorem (it was conjectured in \cite{Buryak}):
\begin{theorem}\label{theorem:quasihomogeneous}
\begin{gather}\label{formula:quasihomogeneous}
\sum_{n\ge 0}P_q\left(\left((\mbC^2)^{[n]}\right)^{T_{\alpha,\beta}}\right)t^n=\prod_{\substack{i\ge 1\\(\alpha+\beta)\nmid i}}\frac{1}{1-t^i}\prod_{i\ge 1}\frac{1}{1-qt^{(\alpha+\beta)i}}.
\end{gather}
\end{theorem}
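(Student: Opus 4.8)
The plan is to pass from geometry to a purely combinatorial identity by equivariant Morse theory, and then to establish that identity by an abacus/Maya-diagram argument organized around the basic representation of $\widehat{\mfsl}_m$, where $m=\alpha+\beta$. First I would analyze $X_n:=\left((\mbC^2)^{[n]}\right)^{T_{\alpha,\beta}}$, which is smooth (the fixed locus of a torus acting on a smooth variety) and carries a residual action of the one-dimensional torus $(\mbC^*)^2/T_{\alpha,\beta}$. I would pick a generic cocharacter $\sigma$ of this residual torus; its fixed points are exactly the monomial ideals $I_\lambda$ with $|\lambda|=n$, the same as the $(\mbC^*)^2$-fixed points. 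Applying the Bialynicki-Birula decomposition of $X_n$ attached to $\sigma$ — with the attracting sets being affine cells, as for the full Hilbert scheme (cf.~\cite{Ellingsrud}) — the homology is freely generated by the cells and concentrated in even degrees, so $P_q(X_n)=\sum_{|\lambda|=n}q^{d_\lambda}$, where $d_\lambda$ is the complex dimension of the cell at $I_\lambda$.

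Next I would compute $d_\lambda$ from Nakajima's formula for the $(\mbC^*)^2$-character of the tangent space $T_{I_\lambda}(\mbC^2)^{[n]}=\sum_{\square\in\lambda}\bigl(t_1^{\,l(\square)+1}t_2^{-a(\square)}+t_1^{-l(\square)}t_2^{\,a(\square)+1}\bigr)$, where $a,l$ denote arm and leg. A weight is $T_{\alpha,\beta}$-invariant (hence tangent to $X_n$) precisely when $\alpha(l+1)=\beta a$ or $\beta(a+1)=\alpha l$; since $\gcd(\alpha,\beta)=1$, the first forces $a=\alpha k,\ l=\beta k-1$, the second $a=\alpha k-1,\ l=\beta k$, and in both cases the hook length is $a+l+1=mk$. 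The two families have opposite $\sigma$-signs, so $d_\lambda=\#\{\square\in\lambda : a(\square)=\alpha k,\ l(\square)=\beta k-1 \text{ for some }k\ge1\}$. Theorem~\ref{theorem:quasihomogeneous} then reduces to the combinatorial identity $\sum_\lambda t^{|\lambda|}q^{d_\lambda}=\prod_{m\nmid i}(1-t^i)^{-1}\prod_{i\ge1}(1-qt^{mi})^{-1}$. Reading the last factor as $\prod_{i\ge1}(1-qt^{mi})^{-1}=\sum_\nu q^{\ell(\nu)}t^{|\nu|}$ over partitions $\nu$ into parts divisible by $m$, the goal becomes a weight-preserving bijection $\lambda\mapsto(\mu,\nu)$ with $\mu$ having no part divisible by $m$, all parts of $\nu$ divisible by $m$, $|\mu|+|\nu|=|\lambda|$, and $\ell(\nu)=d_\lambda$. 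A reassuring consistency check is that the target series, and the statistic $d_\lambda$ up to the $\alpha\leftrightarrow\beta$ symmetry that swaps the two weight families, depend only on $m=\alpha+\beta$; and setting $q=1$ recovers $\prod_i(1-t^i)^{-1}$, matching the Euler characteristic of the Hilbert scheme.

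\textbf{The main obstacle} is that $d_\lambda$ is a \emph{global} statistic on $\lambda$: it is neither the size nor the number of parts of any single component of the $m$-quotient. Concretely, whether a hook of length $mk$ is of the counted type depends on the full interleaving of all $m$ runners of the abacus, not on one runner — for instance, for $m=2$ the partitions $(3)$ and $(1,1)$ share the same $2$-quotient yet have $d_{(3)}=1$ and $d_{(1,1)}=0$, since the unique hook of length $2$ is arm-heavy in one case and leg-heavy in the other. I would resolve this by working directly on the Maya diagram, where a box of hook $h$ is a pair (hole at $p$, particle at $q=p+h$) and the counted boxes are exactly those with $h=mk$ and precisely $\alpha k$ holes in $(p,q)$, i.e.\ with $a:(l+1)=\alpha:\beta$. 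One peels such configurations by a controlled sequence of bead moves on the $m$-runner abacus and shows the generating function factors accordingly.

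The cleanest route — and the one consonant with the representation-theoretic theme of the paper — is to realize $\sum_\lambda t^{|\lambda|}q^{d_\lambda}$ as a doubly graded character of the level-one (basic) $\widehat{\mfsl}_m$-module on fermionic Fock space, with $t$ recording the principal degree and $q$ the distinguished grading by $d_\lambda$. The infinite product is then the image, under this specialization, of the Weyl--Kac/Jacobi-triple-product factorization of the character: the factor $\prod_{m\nmid i}(1-t^i)^{-1}$ arises from the $m-1$ "core" directions of the homogeneous Heisenberg subalgebra, and $\prod_{i\ge1}(1-qt^{mi})^{-1}$ from the remaining direction, which carries the $q$-grading. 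The decisive step, which I expect to require the most care, is proving the compatibility $d_\lambda=\ell(\nu)$ between the geometric cell dimension and this grading on Fock space.
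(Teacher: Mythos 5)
Your first two steps are sound, but they reproduce, in reverse, what the paper does in Section~\ref{section:combinatorial identities}: the Bialynicki-Birula decomposition of $\left((\mbC^2)^{[n]}\right)^{T_{\alpha,\beta}}$ with respect to a generic residual cocharacter, together with the weight formula \eqref{weight decomposition}, shows that $P_q$ of the fixed locus is the generating function of the statistic $d_\lambda$ (your convention counts the family transposed to the paper's $\sharp\{s\in Y\,|\,\alpha l_Y(s)=\beta(a_Y(s)+1)\}$, which is equivalent under $Y\mapsto Y^T$, $\alpha\leftrightarrow\beta$). This only reduces Theorem~\ref{theorem:quasihomogeneous} to the combinatorial identity that is Theorem~\ref{first identity} of the paper --- and that identity is exactly as hard as the theorem itself. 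The paper is explicit on this point: the introduction notes that this combinatorial approach ``doesn't help'' in proving Theorem~\ref{theorem:quasihomogeneous}, and the identity is obtained there as a \emph{corollary} of the geometrically proved theorem, not the other way around. Your proposal therefore has its entire content concentrated in its last two paragraphs, where no proof is given.

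Concretely, neither of your suggested routes to the identity is carried out. The abacus ``peeling by a controlled sequence of bead moves'' is never defined: you would need an explicit weight-preserving bijection $\lambda\mapsto(\mu,\nu)$ with $\ell(\nu)=d_\lambda$, and since, as you yourself observe, $d_\lambda$ is not a function of the $m$-core and $m$-quotient (nor of any single runner), there is no evident candidate --- producing one is an open combinatorial problem, not a routine verification. The Fock-space route has the same hole: the product $\prod_{m\nmid i}(1-t^i)^{-1}\prod_{i\ge 1}(1-qt^{mi})^{-1}$ is indeed a specialization of a level-one $\widehat{\mfsl}_m$-character, but the claim that the resulting $q$-grading on the partition basis coincides with the geometric statistic $d_\lambda$ is precisely the statement to be proven, and you concede it is unproven (``the decisive step\ldots I expect to require the most care''). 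By contrast, the paper circumvents the combinatorics entirely: it identifies the components of the $\Gamma_{\alpha,\beta}$-fixed locus with cyclic quiver varieties $\mfM(v,e_0)$ (Lemma~\ref{lemma:finite group}), proves by a dimension count on attracting sets that all Bialynicki-Birula cells have dimension $\frac12\dim\mfM(v,w)$, hence $P^{BM}_q(\mfM(v,w))=q^{\frac12\dim\mfM(v,w)}P_q\left(\mfM(v,w)^{\tT_{\alpha,\beta}}\right)$ (Theorem~\ref{firsttheorem}), and then computes $\sum_v P^{BM}_q(\mfM(v,e_0))t^{|v|}$ by motivic methods (power structures, cores and quotients, and the fibration over $S^n\mbC^2$). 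Without an actual proof of the identity $\sum_\lambda q^{d_\lambda}t^{|\lambda|}=\prod_{m\nmid i}(1-t^i)^{-1}\prod_{i\ge1}(1-qt^{mi})^{-1}$, your argument is incomplete at its essential step.
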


There is a standard method for constructing a cell decomposition of the Hilbert scheme $\left((\mbC^2)^{[n]}\right)^{T_{\alpha,\beta}}$ using the Bialynicki-Birula theorem. In this way the Poincar\'e polynomial of this Hilbert scheme can be written as a generating function for a certain statistic on Young diagrams of size $n$. However, it happens that this combinatorial approach doesn't help in a proof of Theorem~\ref{theorem:quasihomogeneous}. In fact, we get very nontrivial combinatorial identities as a corollary of this theorem, see Section \ref{subsection:combinatorial identities}.

We can describe the main geometric idea in the proof of Theorem~\ref{theorem:quasihomogeneous} in the following way. The irreducible components of $\left((\mbC^2)^{[n]}\right)^{T_{\alpha,\beta}}$ can be realized as fixed point sets of a $\mbC^*$-action on cyclic quiver varieties. Theorem~\ref{firsttheorem} tells us that the Betti numbers of the fixed point set are equal to the shifted Betti numbers of the quiver variety. Then known results about cohomology of quiver varieties can be used for a proof of Theorem~\ref{theorem:quasihomogeneous}.

In principle, Theorem~\ref{firsttheorem} has an independent interest. However, there is another application of this theorem. In~\cite{BuryakFeigin} we studied the generating series of the numbers of quasihomogeneous components in a moduli space of sheaves on the projective plane. Combinatorially we managed to compute it only in the simplest case. Now using Theorem~\ref{firsttheorem} we can give an answer in a general case, this is Theorem~\ref{theorem:irreducible components}. We show that it proves our conjecture from~\cite{BuryakFeigin}.

\subsection{Combinatorial identities}\label{subsection:combinatorial identities}

Here we formulate two combinatorial identities that follow from Theorem \ref{theorem:quasihomogeneous}. We denote by $\mcY$ the set of all Young diagrams. For a Young diagram $Y$ let 
\begin{align*}
&r_l(Y)=|\{(i,j)\in Y|j=l\}|,\\
&c_l(Y)=|\{(i,j)\in Y|i=l\}|.
\end{align*}
For a point $s=(i,j)\in\Z_{\ge 0}^2$ let
\begin{align*}
&l_Y(s)=r_j(Y)-i-1,\\
&a_Y(s)=c_i(Y)-j-1,
\end{align*}
see Figure \ref{armlegpic}. Note that $l_Y(s)$ and $a_Y(s)$ are negative, if $s\notin Y$.

\begin{figure}[h]
\begin{center}
\begin{picture}(70,40)
\put(0,6){
\multiput(0,0)(0,5){7}{\line(1,0){5}}
\multiput(5,0)(0,5){6}{\line(1,0){5}}
\multiput(10,0)(0,5){6}{\line(1,0){5}}
\multiput(15,0)(0,5){4}{\line(1,0){5}}

\multiput(0,0)(5,0){5}{\line(0,1){5}}
\multiput(0,5)(5,0){5}{\line(0,1){5}}
\multiput(0,10)(5,0){5}{\line(0,1){5}}
\multiput(0,15)(5,0){4}{\line(0,1){5}}
\multiput(0,20)(5,0){4}{\line(0,1){5}}
\multiput(0,25)(5,0){2}{\line(0,1){5}}

\put(6.5,6.5){$s$}
\multiput(11,6)(5,0){2}{$\spadesuit$}
\multiput(5.8,10.8)(0,5){3}{$\heartsuit$}

\put(31,11){$a_Y(s)=$ number of $\heartsuit$}
\put(31,16){$l_Y(s)=$ number of $\spadesuit$}
}
\put(8.5,0){$Y$}
\end{picture}
\end{center}
\caption{}
\label{armlegpic}
\end{figure}
The number of boxes in a Young diagram $Y$ is denoted by $|Y|$.
\begin{theorem}\label{first identity}
Let $\alpha$ and $\beta$ be two arbitrary positive coprime integers. Then we have
\begin{gather*}\label{formula:first identity}
\sum_{Y\in\mcY}q^{\sharp\{s\in Y|\alpha l(s)=\beta(a(s)+1)\}}t^{|Y|}=\prod_{\substack{i\ge 1\\(\alpha+\beta)\nmid i}}\frac{1}{1-t^i}\prod_{i\ge 1}\frac{1}{1-qt^{(\alpha+\beta)i}}.
\end{gather*}
\end{theorem}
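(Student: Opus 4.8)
The plan is to deduce Theorem~\ref{first identity} from Theorem~\ref{theorem:quasihomogeneous} by identifying the left hand side of the former with the left hand side of the latter; once this is done the two right hand sides are literally the same product, and we are finished. Concretely, I will show that for every $n\ge 0$
\[
P_q\left(\left((\mbC^2)^{[n]}\right)^{T_{\alpha,\beta}}\right)=\sum_{\substack{Y\in\mcY\\|Y|=n}}q^{\sharp\{s\in Y\,|\,\alpha l_Y(s)=\beta(a_Y(s)+1)\}}.
\]
Summing over $n$ and invoking Theorem~\ref{theorem:quasihomogeneous} then yields the claimed infinite product. The tool for this equality is a Bialynicki--Birula cell decomposition of the smooth fixed point variety $Z_n:=\left((\mbC^2)^{[n]}\right)^{T_{\alpha,\beta}}$.

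First I would fix the geometry. The monomial ideals $I_Y$, indexed by the Young diagrams $Y$ with $|Y|=n$, are the isolated fixed points of the full torus $(\mbC^*)^2$ on $(\mbC^2)^{[n]}$, and all of them lie in $Z_n$; moreover $Z_n$, being the fixed locus of $T_{\alpha,\beta}$ on a smooth variety, is itself smooth. I would then choose a one-parameter subgroup $\lambda(t)=(t^p,t^q)$ with $p,q>0$ and $(p,q)$ not proportional to $(\alpha,\beta)$. Then $\langle T_{\alpha,\beta},\lambda\rangle$ is a two-dimensional closed connected subgroup of $(\mbC^*)^2$, hence equals $(\mbC^*)^2$, so the $\lambda$-fixed points of $Z_n$ are exactly the $I_Y$. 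Since $p,q>0$, the limit $\lim_{t\to 0}\lambda(t)\cdot x$ exists for every point of $(\mbC^2)^{[n]}$, and therefore for every point of the closed subvariety $Z_n$; thus the attracting sets give a filtrable decomposition of $Z_n$ into affine cells $C_Y$, one per $I_Y$, with $\dim_{\mbC}C_Y$ equal to the number of positive $\lambda$-weights in the tangent space $T_{I_Y}Z_n$. All homology then sits in even degree and $P_q(Z_n)=\sum_{|Y|=n}q^{\dim_{\mbC}C_Y}$. This is exactly the mechanism used for the full Hilbert scheme and in the $\alpha=1$ case of \cite{Buryak}.

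Next I would extract the cell dimensions from the standard character of the tangent space at a monomial ideal,
\[
\chi\left(T_{I_Y}(\mbC^2)^{[n]}\right)=\sum_{s\in Y}\left(t_1^{-l_Y(s)}t_2^{a_Y(s)+1}+t_1^{l_Y(s)+1}t_2^{-a_Y(s)}\right),
\]
with $(t_1,t_2)$ the coordinates on $(\mbC^*)^2$ (the labeling of $t_1,t_2$ is immaterial, since the right hand side of the theorem depends only on $\alpha+\beta$). A weight $t_1^{a'}t_2^{b'}$ is $T_{\alpha,\beta}$-fixed precisely when $\alpha a'+\beta b'=0$, so among the two weights of a box $s$ the first is fixed exactly when $\alpha l_Y(s)=\beta(a_Y(s)+1)$ and the second exactly when $\alpha(l_Y(s)+1)=\beta a_Y(s)$. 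Hence $T_{I_Y}Z_n$ is spanned by the first-type weights over $A(Y)=\{s\in Y\,|\,\alpha l_Y(s)=\beta(a_Y(s)+1)\}$ and the second-type weights over $B(Y)=\{s\in Y\,|\,\alpha(l_Y(s)+1)=\beta a_Y(s)\}$. Evaluating the $\lambda$-weight $pa'+qb'$ on these fixed directions, a short computation shows that the first-type weights are positive and the second-type negative as soon as $p\beta<q\alpha$; I would therefore fix $p,q>0$ with $p/q<\alpha/\beta$. With this orientation $\dim_{\mbC}C_Y=|A(Y)|=\sharp\{s\in Y\,|\,\alpha l_Y(s)=\beta(a_Y(s)+1)\}$, which is precisely the displayed equality.

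The step I expect to be the main obstacle is not the weight bookkeeping, which is routine, but the justification that the Bialynicki--Birula decomposition of the non-projective variety $Z_n$ is an honest cell decomposition computing its homology; this rests on the existence of the limits $\lim_{t\to 0}\lambda(t)\cdot x$ on all of $Z_n$, which produces a filtrable paving by affine spaces and forces the homology to be free and concentrated in even degrees, and which is exactly what the positivity $p,q>0$ secures. As a consistency check, the opposite orientation $p\beta>q\alpha$ would instead yield $\sum_{|Y|=n}q^{|B(Y)|}$; the two generating series agree because transposition of Young diagrams together with the swap $\alpha\leftrightarrow\beta$ carries $B_{\alpha,\beta}(Y)$ to $A_{\beta,\alpha}(Y')$, and the right hand side is invariant under $\alpha\leftrightarrow\beta$.
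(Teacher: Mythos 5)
Your route is the same as the paper's: identify $\sum_{|Y|=n}q^{\sharp\{s\in Y\,|\,\alpha l_Y(s)=\beta(a_Y(s)+1)\}}$ with $P_q\left(\left((\mbC^2)^{[n]}\right)^{T_{\alpha,\beta}}\right)$ via a Bialynicki--Birula decomposition of the fixed locus $Z_n=\left((\mbC^2)^{[n]}\right)^{T_{\alpha,\beta}}$ with respect to a one-parameter subgroup $(t^p,t^q)$, $p,q>0$, not proportional to $(\alpha,\beta)$, and then quote Theorem~\ref{theorem:quasihomogeneous}. The paper takes $p=1$, $q=\gamma\gg0$, which satisfies your condition $p\beta<q\alpha$. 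Your bookkeeping is correct and agrees with the paper's: the $T_{\alpha,\beta}$-invariant tangent weights at a monomial ideal are exactly those in \eqref{alpha,beta decomposition}, the first-type ones are attracting and the second-type ones repelling when $p\beta<q\alpha$, and your stabilizer argument that the $\lambda$-fixed points of $Z_n$ are precisely the monomial ideals is valid.

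However, the step you yourself single out as the main obstacle is resolved by a false principle, and this is a genuine gap. Existence of all limits $\lim_{t\to0}\lambda(t)x$ together with a filtrable paving by affine cells does \emph{not} force the ordinary homology to be free with one generator in degree $2\dim C$ per cell: the simplest counterexample is $\mbC^n$ with the scaling action, a single attracting cell, whose ordinary homology is concentrated in degree $0$, not $2n$. What such a paving computes is Borel--Moore homology (equivalently, compactly supported cohomology); this is exactly why Theorem~\ref{firsttheorem} pairs $P^{BM}_q$ on the non-compact quiver variety with $P_q$ only on its compact fixed-point set. As written, your argument yields $P^{BM}_q(Z_n)=\sum_{|Y|=n}q^{\sharp\{s\in Y\,|\,\alpha l_Y(s)=\beta(a_Y(s)+1)\}}$, which cannot be matched against the $P_q$ in Theorem~\ref{theorem:quasihomogeneous} without knowing that $Z_n$ is compact. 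The missing ingredient --- which you explicitly deny when you call $Z_n$ non-projective --- is that $Z_n$ \emph{is} projective: a $T_{\alpha,\beta}$-fixed ideal with $\alpha,\beta\ge1$ has $T_{\alpha,\beta}$-invariant support, and the only $T_{\alpha,\beta}$-fixed point of $\mbC^2$ is the origin, so $Z_n$ is a closed subvariety of the punctual Hilbert scheme $\pi^{-1}(n\cdot[0])$, which is projective; this compactness is also recorded in the paper (Section~\ref{section:first theorem}, via \cite{BuryakFeigin}) and is part of the statement of Theorem~\ref{firsttheorem}. Once compactness is in hand, ordinary and Borel--Moore homology coincide, the decomposition of \cite{B1,B2} is an honest cell decomposition, and your weight computation completes the proof exactly as in the paper.
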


In the case $\alpha=\beta=1$ another identity can be derived from Theorem~\ref{theorem:quasihomogeneous}. The $q$-binomial coefficients are defined by
$$
\genfrac[]{0pt}{}{M}{N}_q=\frac{\prod\limits_{i=1}^M(1-q^i)}{\prod\limits_{i=1}^N(1-q^i)\prod\limits_{i=1}^{M-N}(1-q^i)}.
$$
By $\mathcal P$ we denote the set of all partitions. For a partition $\lambda=(\lambda_1,\lambda_2,\ldots,\lambda_r), \lambda_1\ge\lambda_2\ge\ldots\ge\lambda_r$, let $|\lambda|=\sum_{i=1}^r\lambda_i$.
\begin{theorem}\label{second identity}
$$
\sum_{\lambda\in\mathcal P}\prod_{i\ge 1}\genfrac[]{0pt}{}{\lambda_i-\lambda_{i+2}+1}{\lambda_{i+1}-\lambda_{i+2}}_qt^{\frac{\lambda_1(\lambda_1-1)}{2}+|\lambda|}=\prod_{i\ge 1}\frac{1}{(1-t^{2i-1})(1-qt^{2i})}.
$$
\end{theorem}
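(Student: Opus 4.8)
The plan is to deduce Theorem~\ref{second identity} from Theorem~\ref{theorem:quasihomogeneous} specialized to $\alpha=\beta=1$, by interpreting its left-hand side as the generating series of the Poincar\'e polynomials of the varieties $X_n:=\left((\mbC^2)^{[n]}\right)^{T_{1,1}}$. First I would observe that, since $\{i\ge1:2\nmid i\}=\{2i-1:i\ge1\}$, the right-hand side of Theorem~\ref{second identity} is precisely the $\alpha=\beta=1$ instance of \eqref{formula:quasihomogeneous}:
\[
\prod_{i\ge1}\frac{1}{(1-t^{2i-1})(1-qt^{2i})}=\prod_{\substack{i\ge1\\2\nmid i}}\frac{1}{1-t^i}\prod_{i\ge1}\frac{1}{1-qt^{2i}}.
\]
Hence, by Theorem~\ref{theorem:quasihomogeneous}, it equals $\sum_{n\ge0}P_q(X_n)t^n$, and it remains to show that the left-hand side of Theorem~\ref{second identity} computes the same series. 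Here $X_n$ is the graded Hilbert scheme parametrizing homogeneous ideals $I=\bigoplus_d I_d\subset\mbC[x,y]$ of colength $n$; it is smooth, being a torus fixed locus inside the smooth variety $(\mbC^2)^{[n]}$, and projective, since such ideals are supported at the origin.

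Next I would stratify $X_n$ by the Hilbert function $T=(T_0,T_1,\dots)$, $T_d=\dim_{\mbC}\mbC[x,y]_d/I_d$, of the quotient ring. For graded quotients of $\mbC[x,y]$ the admissible $T$ are exactly those increasing by unit steps $1,2,\dots,\lambda_1$ and then weakly decreasing; recording the values after the peak as $\lambda_2\ge\lambda_3\ge\cdots$ sets up a bijection $T\leftrightarrow\lambda$ between admissible Hilbert functions of colength $n$ and partitions $\lambda$ with $\tfrac{\lambda_1(\lambda_1-1)}{2}+|\lambda|=n$, since $|T|=(1+2+\cdots+\lambda_1)+\sum_{i\ge2}\lambda_i=\tfrac{\lambda_1(\lambda_1-1)}{2}+|\lambda|$, matching the exponent of $t$ in Theorem~\ref{second identity}. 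Each stratum $Z_T\subset X_n$ is the locus of graded ideals with Hilbert function $T$. Building $I$ degree by degree — having fixed $I_{d-1}$, the inclusion $I_d\supseteq xI_{d-1}+yI_{d-1}=(\mathfrak m I)_d$ is forced, and $I_d/(\mathfrak m I)_d$ is an arbitrary $g_d$-dimensional subspace of $\mbC[x,y]_d/(\mathfrak m I)_d$, where $g_d$ is the number of degree-$d$ minimal generators of $I$ — exhibits $Z_T$ as a tower of Grassmannian bundles, hence a smooth affine-paved variety with
\[
P_q(Z_T)=\prod_{d\ge0}\genfrac[]{0pt}{}{T_d+g_d}{g_d}_q.
\]
This is the description of these strata going back to Iarrobino~\cite{Iarrobino} and Evain~\cite{Evain}.

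Then I would add up the strata. Both $X_n$, via the Bialynicki--Birula decomposition for a generic one-parameter subgroup of $(\mbC^*)^2$, and each $Z_T$, via the Grassmannian tower, admit affine pavings, so their rational cohomology is of Tate type and concentrated in even degrees; for such a variety $V$ the Poincar\'e polynomial $P_q(V)$ is recovered from the class $[V]$ in the Grothendieck ring of varieties by sending the Lefschetz class $\mbL$ to $q$. Since $[X_n]=\sum_{|T|=n}[Z_T]$, specialization yields $P_q(X_n)=\sum_{|T|=n}P_q(Z_T)$; phrasing the additivity this way avoids having to check that the global Bialynicki--Birula cells respect the stratification, which they need not. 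Finally, a direct evaluation of $g_d$ from the second difference of the staircase $T$, namely $g_d=\max(0,\,2T_{d-1}-T_d-T_{d-2})$, turns the product above into
\[
\prod_{d\ge0}\genfrac[]{0pt}{}{T_d+g_d}{g_d}_q=\prod_{i\ge1}\genfrac[]{0pt}{}{\lambda_i-\lambda_{i+2}+1}{\lambda_{i+1}-\lambda_{i+2}}_q,
\]
so that $\sum_{n\ge0}P_q(X_n)t^n=\sum_{\lambda\in\mathcal P}\bigl(\prod_{i\ge1}\genfrac[]{0pt}{}{\lambda_i-\lambda_{i+2}+1}{\lambda_{i+1}-\lambda_{i+2}}_q\bigr)t^{\frac{\lambda_1(\lambda_1-1)}{2}+|\lambda|}$, which is the left-hand side of Theorem~\ref{second identity}.

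The hard part will be the geometric input of the second step: proving that $g_d$, equivalently $\dim(\mathfrak m I)_d$, is constant along $Z_T$, so that each stage is genuinely a locally trivial Grassmannian bundle and $Z_T$ is smooth and affine-paved (in particular that for the staircase $T$ no minimal generator and syzygy share a degree, so the generic generator numbers are the ones above). I expect this, rather than the combinatorics, to be the crux; the purity argument of the third step is precisely what allows the Betti numbers to be summed over the strata without tracking how the global cells meet them, and the final identity between the two products is an elementary finite verification once the staircase shape of $T$ is used to read off $g_d$.
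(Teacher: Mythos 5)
Your overall route is the same as the paper's: reduce to Theorem \ref{theorem:quasihomogeneous} at $\alpha=\beta=1$ and identify $\sum_{n\ge 0}P_q\bigl(((\mbC^2)^{[n]})^{T_{1,1}}\bigr)t^n$ with the left-hand side by decomposing the fixed locus according to Hilbert functions $T\leftrightarrow\lambda$. The paper does exactly this, but it takes both nontrivial inputs --- the parametrization of the components by partitions and the formula $\prod_{i\ge 1}\genfrac[]{0pt}{}{\lambda_i-\lambda_{i+2}+1}{\lambda_{i+1}-\lambda_{i+2}}_q$ for their Poincar\'e polynomials --- as a citation to \cite{Iarrobino}. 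Your proposal instead tries to rederive the second input by exhibiting each stratum $Z_T$ as a tower of Grassmannian bundles, and that step is false, not merely unproved. Counterexample: $\lambda=(2,1,1)$, i.e.\ $T=(1,2,1,1,0,\dots)$, $n=5$. Since $\dim I_2=2$ and $\dim I_3=3$, the ideal condition $xI_2+yI_2\subseteq I_3$ forces $\dim(xI_2+yI_2)\le 3$, which for a $2$-dimensional subspace $V=I_2\subset\mbC[x,y]_2$ happens if and only if $V=\ell\cdot\mbC[x,y]_1$ for a linear form $\ell$; then $I_3=\ell\cdot\mbC[x,y]_2$ and $I_4=\mbC[x,y]_4$ are forced. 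Hence $Z_T\cong\mathbb{P}^1$ and $P_q(Z_T)=1+q$, in agreement with Iarrobino's product $\genfrac[]{0pt}{}{2}{0}_q\genfrac[]{0pt}{}{2}{1}_q\genfrac[]{0pt}{}{2}{0}_q=1+q$. Your tower, however, would let $I_2$ be an arbitrary point of $Gr(2,3)$ (here $(\mathfrak m I)_2=0$, $g_2=2$, $T_2=1$), giving the factor $\genfrac[]{0pt}{}{3}{2}_q=1+q+q^2$; with $g_4=1$, $T_4=0$ contributing $1$, your formula yields $1+q+q^2\ne 1+q$, so the claimed identity between your product over $d$ and Iarrobino's product over $i$ fails as well.

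Note that the failure is not the one you flagged as the crux: on this $Z_T$ the generator numbers \emph{are} constant ($g_2=2$, $g_4=1$ at every point). What breaks is the forward induction itself: an admissible $I_d$ is constrained not only by $(\mathfrak m I)_d\subseteq I_d$ but also by the requirement that the Hilbert function be attainable in higher degrees, concretely $\dim(xI_d+yI_d)\le (d+2)-T_{d+1}$, which is a proper closed condition on the Grassmannian whenever $T$ decreases slowly (here it cuts $Gr(2,3)$ down to a $\mathbb{P}^1$). So the truncation maps are not surjective onto the naive Grassmannian tower, and their fibers are not full Grassmannians; this back-propagation of constraints is exactly the difficulty that Iarrobino--Yameogo resolve, and their cell decomposition of $G_T$ is the substantive content of \cite{Iarrobino}. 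The remaining ingredients of your argument are fine: the identification of the two right-hand sides, the bijection between admissible $T$ and partitions with exponent $\frac{\lambda_1(\lambda_1-1)}{2}+|\lambda|$, and the additivity of $P_q$ over the $Z_T$ (which needs no purity argument, since the Hilbert function is locally constant on the fixed locus, so each $Z_T$ is a union of connected components). But as it stands the proof has a genuine gap at its central step, which can only be repaired by reproving Iarrobino--Yameogo's theorem or by citing it, as the paper does.
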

Here for a partition $\lambda=(\lambda_1,\lambda_2,\ldots,\lambda_r), \lambda_1\ge\lambda_2\ge\ldots\ge\lambda_r$, we adopt the convention $\lambda_{>r}=0$.

\subsection{Cyclic quiver varieties}

Quiver varieties were introduced by H.~Nakajima in \cite{Nakajima1}. Here we review the construction in the particular case of cyclic quiver varieties. We follow the approach from \cite{Nakajima3}.  

Let $m\ge 2$. We fix vector spaces $V_0,V_1,\ldots,V_{m-1}$ and $W_0,W_1,\ldots,W_{m-1}$ and we denote by
$$
v=(\dim V_0,\ldots,\dim V_{m-1}), w=(\dim W_0,\ldots,\dim W_{m-1})\in \mbZ_{\ge 0}^m.
$$
the dimension vectors. We adopt the convention $V_m=V_0$. Let 
\begin{align*}
M(v,w)=&\left(\bigoplus_{k=0}^{m-1}\Hom(V_k,V_{k+1})\right)\oplus\left(\bigoplus_{k=0}^{m-1}\Hom(V_k,V_{k-1})\right)\\
&\oplus\left(\bigoplus_{k=0}^{m-1}\Hom(W_k,V_k)\right)\oplus\left(\bigoplus_{k=0}^{m-1}\Hom(V_k,W_k)\right).
\end{align*}
The group $G_v=\prod\limits_{k=0}^{m-1}GL(V_k)$ acts on $M(v,w)$ by
$$
g\cdot (B_1,B_2,i,j)\mapsto(g B_1 g^{-1},g B_2 g^{-1},g i,jg^{-1}).
$$
The map $\mu\colon M(v,w)\to \bigoplus\limits_{k=0}^{m-1}\Hom(V_k,V_k)$ is defined as follows
$$
\mu(B_1,B_2,i,j)=[B_1,B_2]+ij.
$$ 
Let 
$$
\mu^{-1}(0)^s=\left\{(B,i,j)\in\mu^{-1}(0)\left|
\begin{smallmatrix}
\text{if a collection of subspaces $S_k\subset V_k$}\\
\text{is $B$-invariant and contains $\im(i)$, then $S_k=V_k$}
\end{smallmatrix}
\right.\right\}.
$$
The action of $G_v$ on $\mu^{-1}(0)^s$ is free. The quiver variety $\mfM(v,w)$ is defined as the quotient 
$$
\mfM(v,w)=\mu^{-1}(0)^s/G_v,
$$
see Figure \ref{pic1}. 

The variety $\mfM(v,w)$ is irreducible (see e.g.\cite{Nakajima3}). 

We define the $(\mbC^*)^2\times(\mbC^*)^m$-action on $\mfM(v,w)$ as follows:
$$
(t_1,t_2,e_k)\cdot (B_1,B_2,i_k,j_k)=(t_1 B_1,t_2 B_2,e_k^{-1}i_k,t_1t_2e_k j_k).
$$

\begin{figure}[t]

\begin{tikzpicture}[auto]
\node (a) at (90:2.5) {$V_0$};
\node (e) at (162:2.5) {$V_{m-1}$};
\node (d) at (234:2.5) {$V_{m-2}$};
\node (c) at (306:2.5) {$V_2$};
\node (b) at (18:2.5) {$V_1$};

\node (aa) at (90:4.3) {$W_0$};
\node (ee) at (162:4.3) {$W_{m-1}$};
\node (dd) at (234:4.3) {$W_{m-2}$};
\node (cc) at (306:4.3) {$W_2$};
\node (bb) at (18:4.3) {$W_1$};

\draw [->] (a) to [bend left=12] node {$B_1$} (b);
\draw [->] (b) to [bend left=12] node {$B_2$} (a);
\draw [->] (b) to [bend left=12] node {$B_1$} (c);
\draw [->] (c) to [bend left=12] node {$B_2$} (b);
\draw [dashed] (c) to (d);
\draw [->] (d) to [bend left=12] node {$B_1$} (e);
\draw [->] (e) to [bend left=12] node {$B_2$} (d);
\draw [->] (e) to [bend left=12] node {$B_1$} (a);
\draw [->] (a) to [bend left=12] node {$B_2$} (e);

\draw [->] (a) to [bend left=12] node {$j$} (aa);
\draw [->] (aa) to [bend left=12] node {$i$} (a);
\draw [->] (b) to [bend left=12] node {$j$} (bb);
\draw [->] (bb) to [bend left=12] node {$i$} (b);
\draw [->] (c) to [bend left=12] node {$j$} (cc);
\draw [->] (cc) to [bend left=12] node {$i$} (c);
\draw [->] (d) to [bend left=12] node {$j$} (dd);
\draw [->] (dd) to [bend left=12] node {$i$} (d);
\draw [->] (e) to [bend left=12] node {$j$} (ee);
\draw [->] (ee) to [bend left=12] node {$i$} (e);
\end{tikzpicture}
\caption{Cyclic quiver variety $\mfM(v,w)$}

\label{pic1}
\end{figure}

\subsection{$\mbC^*$-action on $\mfM(v,w)$}

In this section we formulate Theorem~\ref{firsttheorem} that is a key step in the proofs of Theorems \ref{theorem:quasihomogeneous} and \ref{theorem:irreducible components}.
 
Let $\alpha$ and $\beta$ be any two positive coprime integers, such that $\alpha+\beta=m$. Define the integers $\lambda_0,\lambda_1,\ldots,\lambda_{m-1}\in[-(m-1),0]$ by the formula $\lambda_k\equiv-\alpha k\mmod m$. We define the one-dimensional subtorus $\widetilde T_{\alpha,\beta}\subset(\mbC^*)^2\times(\mbC^*)^m$ by
$$
\widetilde T_{\alpha,\beta}=\{(t^\alpha,t^\beta,t^{\lambda_0},t^{\lambda_1},\ldots,t^{\lambda_{m-1}})\in(\mbC^*)^2\times (\mbC^*)^m|t\in\mbC^*\}.
$$
For a manifold $X$ we denote by $H_*^{BM}(X)$ the homology group of possibly infinite singular chains with locally finite support (the Borel-Moore homology) with rational coefficients. Let $P^{BM}_q(X)=\sum\limits_{i\ge 0}\dim H_i^{BM}(X)q^{\frac{i}{2}}$.
\begin{theorem}\label{firsttheorem}
The fixed point set $\mfM(v,w)^{\widetilde T_{\alpha,\beta}}$ is compact and 
$$
P^{BM}_q(\mfM(v,w))=q^{\frac{1}{2}\dim\mfM(v,w)}P_q\left(\mfM(v,w)^{\widetilde T_{\alpha,\beta}}\right).
$$
\end{theorem}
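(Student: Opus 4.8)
The plan is to combine the holomorphic symplectic geometry of $\mfM(v,w)$ with the Bialynicki--Birula decomposition for the $\mbC^*$-action coming from $\widetilde T_{\alpha,\beta}$. First I would record that $\mfM(v,w)$ carries the natural holomorphic symplectic form induced by $\tr(dB_1\wedge dB_2)+\tr(di\wedge dj)$, and compute its weight: since $B_1,B_2$ scale by $t^\alpha,t^\beta$ and each pair $i_k,j_k$ scales with total weight $t^\alpha t^\beta$, the form scales by $t^{\alpha+\beta}=t^m$. Hence at any fixed point the weights of the $\mbC^*$-action on the tangent space are symmetric under $a\mapsto m-a$, and writing $p_a$ for the dimension of the weight-$a$ subspace, a short computation gives $\dim T^+-\tfrac12\dim\mfM(v,w)=\tfrac12\sum_{a=1}^{m-1}p_a$, where $T^{\pm}$ are the positive/negative weight parts. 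Thus the attracting rank at each fixed component equals $\tfrac12\dim\mfM(v,w)$ \emph{provided} no weight lies in $\{1,\dots,m-1\}$.

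Next I would establish compactness and set up the decomposition. The projective morphism $\pi\colon\mfM(v,w)\to\mfM_0(v,w)$ to the affine quotient is $\widetilde T_{\alpha,\beta}$-equivariant, and every generator of the coordinate ring of $\mfM_0(v,w)$ — traces of oriented cycles in $B_1,B_2$, and the maps $j_k(\text{path})i_l$ — has strictly positive $\widetilde T_{\alpha,\beta}$-weight, since the relevant exponents $\alpha,\beta\ge 1$, $m+\lambda_k\ge 1$, $-\lambda_l\ge 0$ are nonnegative with at least one positive. Therefore $\widetilde T_{\alpha,\beta}$ contracts $\mfM_0(v,w)$ to the cone point, so $\mfM_0(v,w)^{\widetilde T_{\alpha,\beta}}$ is a single point and $\mfM(v,w)^{\widetilde T_{\alpha,\beta}}\subset\pi^{-1}(0)$ is compact. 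The same positivity shows $\lim_{t\to 0}t\cdot x$ exists for every $x$, so $\mfM(v,w)$ is semiprojective and the attracting cells $S_F=\{x:\lim_{t\to 0}t\cdot x\in F\}$ form a stratification by affine bundles of rank $r_F=\dim T^+|_F$ over the fixed components $F$. As these components have only even cohomology, the stratification is perfect in Borel--Moore homology, yielding $P^{BM}_q(\mfM(v,w))=\sum_F q^{r_F}P_q(F)$.

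The heart of the argument, and the step I expect to be the main obstacle, is to prove $r_F=\tfrac12\dim\mfM(v,w)$ for every component, i.e. $p_a=0$ for $1\le a\le m-1$. At a fixed point the action is trivialised by a cocharacter $\rho\colon\mbC^*\to G_v$ grading each $V_k=\bigoplus_p V_k(p)$; the defining equations force $B_1\colon V_k(p)\to V_{k+1}(p+\alpha)$, $B_2\colon V_k(p)\to V_{k-1}(p+\beta)$, with $\im(i_k)\subset V_k(-\lambda_k)$ and $j_k$ supported on $V_k(-m-\lambda_k)$. I would then introduce the charge $\Theta=p-\alpha k \bmod m$: it is preserved by $B_1$ and $B_2$ because $\alpha+\beta=m$, and since $-\lambda_k\equiv\alpha k\bmod m$ every framing image has $\Theta\equiv 0$. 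As $V$ is generated from $\im(i)$ by $B_1,B_2$, this forces $V_k(p)\ne 0\Rightarrow p\equiv\alpha k\bmod m$ at all vertices. Feeding these congruences into the tangent complex $\mathfrak{g}_v\to M(v,w)\to\mathfrak{g}_v$, whose three terms contribute weights $p-q$, the four block weights, and $m+p-q$ respectively, one checks that every weight in the virtual tangent character is $\equiv 0\bmod m$; in particular none lies in $\{1,\dots,m-1\}$, so $p_a=0$ there and $\dim T^+=\tfrac12\dim\mfM(v,w)$.

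Combining the two displays gives $P^{BM}_q(\mfM(v,w))=\sum_F q^{r_F}P_q(F)=q^{\frac12\dim\mfM(v,w)}\sum_F P_q(F)=q^{\frac12\dim\mfM(v,w)}P_q(\mfM(v,w)^{\widetilde T_{\alpha,\beta}})$, as claimed. The delicate points to watch are the exact signs in the fixed-point equations, which produce the congruence $p\equiv\alpha k$ on which the whole computation rests, and the justification that the Bialynicki--Birula long exact sequences split; it is precisely the arithmetic of the charge $\Theta$ that makes the chosen weights $\lambda_k\equiv-\alpha k$ yield the uniform shift by $q^{\frac12\dim\mfM(v,w)}$.
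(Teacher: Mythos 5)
Your proposal is correct, but it takes a genuinely different route from the paper at the crucial step, so a comparison is worthwhile. The paper never works intrinsically on $\mfM(v,w)$: it first identifies $\mfM(v,w)$ with a component of the fixed locus $\mcM(r,n)^{\Gamma^{\vec\theta}_{\alpha,\beta}}$ of a \emph{finite} cyclic group acting on the moduli of framed sheaves (Lemma~\ref{lemma:finite group}), imports compactness from \cite{BuryakFeigin}, proves existence of Bialynicki--Birula limits exactly as you do (projectivity of $\pi$ to the affine quotient plus positivity of the $T^{\vec\theta}_{\alpha,\beta}$-weights on Lusztig's generators), and then computes the attracting-cell dimension $d^+_{v,k}$ \emph{combinatorially}: at a $(\mbC^*)^2\times(\mbC^*)^r$-fixed point indexed by an $r$-tuple of Young diagrams, it counts arm/leg weights satisfying the mod-$m$ condition, and an index swap $i\leftrightarrow j$ converts the count of weights that are ``$\equiv 0$ and $>0$'' plus ``$\equiv 0$ and $<m$'' into the full count of weights $\equiv 0\bmod m$, which is $\dim\mfM(v,w)$, whence $d^+=\tfrac12\dim\mfM(v,w)$. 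Your symplectic argument is precisely the structural explanation of that index swap: the pairing of the weight $\theta_j-\theta_i-\alpha l+\beta(a+1)$ with $\theta_i-\theta_j+\alpha(l+1)-\beta a$ (their sum is $m$) is the weight-$m$ symplectic form, and your charge $\Theta$ replaces the mod-$m$ box count. Your version is more intrinsic and visibly generalizes to other quiver varieties with a contracting $\mbC^*$-action; the paper's detour through $\mcM(r,n)$ is not wasted, however, because Lemma~\ref{lemma:finite group} is needed anyway for the applications (Theorems~\ref{theorem:quasihomogeneous} and~\ref{theorem:irreducible components}), and its Grothendieck-ring conclusion ($[\,\cdot\,]=\mbL^{\frac12\dim}[\,\cdot\,]$, then $\mbZ[\mbL]\hookrightarrow K_0(\nu_{\mbC})$) sidesteps any homological degeneration statement.

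Two points in your write-up are asserted rather than proved and should be shored up. First, the claim that the fixed components have only even cohomology: this follows because the residual action of the larger torus $(\mbC^*)^2\times(\mbC^*)^m$ on each component of $\mfM(v,w)^{\widetilde T_{\alpha,\beta}}$ has finitely many fixed points, so Bialynicki--Birula gives a cell decomposition; the paper makes exactly this move. Second, the ``perfection'' of the attracting stratification in Borel--Moore homology for a non-compact (semiprojective) variety needs either a purity/weight argument or a reference; alternatively you can avoid it entirely by running your dimension count inside $K_0(\nu_{\mbC})$ as the paper does, since your cell-by-cell statement $[\mfM(v,w)]=\mbL^{\frac12\dim\mfM(v,w)}\bigl[\mfM(v,w)^{\widetilde T_{\alpha,\beta}}\bigr]$ plus the injectivity of $\mbZ[z]\to K_0(\nu_{\mbC})$ yields the Poincar\'e-polynomial identity. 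With these two repairs your argument is complete and correct.
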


\subsection{Quasihomogeneous components in the moduli space of sheaves}

Here we formulate our result that relates the numbers of quasihomogeneous components in a moduli space of sheaves with characters of the affine Lie algebra $\widehat{sl}_m$.

The moduli space $\mcM(r,n)$ is defined as follows (see e.g.\cite{Nakajima}):
\begin{gather*}
\mcM(r,n)=\left.\left\{(B_1,B_2,i,j)\left|
\begin{smallmatrix}
1) [B_1,B_2]+ij=0\\
2) \text{(stability) There is no subspace} \\
\text{$S\subsetneq\mbC^n$ such that $B_{\alpha}(S)\subset S$ ($\alpha=1,2$)}\\
\text{and $\mathop{Im}(i)\subset S$} 
\end{smallmatrix}\right.\right\}\right/GL_n(\mbC),
\end{gather*}
where $B_1,B_2\in End(\mbC^n), i\in Hom(\mbC^r,\mbC^n)$ and $j\in Hom(\mbC^n,\mbC^r)$ with the action of $GL_n(\mbC)$ given by 
\begin{gather*}
g\cdot(B_1,B_2,i,j)=(gB_1g^{-1},gB_2g^{-1},gi,jg^{-1}),
\end{gather*} 
for $g\in GL_n(\mbC)$. 

The variety $\mcM(r,n)$ has another description as the moduli space of framed torsion free sheaves on the projective plane, but for our purposes the given definition is better. We refer the reader to \cite{Nakajima} for details. The variety $\mcM(1,n)$ is isomorphic to $(\mbC^2)^{[n]}$ (see e.g.\cite{Nakajima}). 

Define the $(\mbC^*)^2\times(\mbC^*)^r$-action on $\mcM(r,n)$ by
\begin{gather*}
(t_1,t_2,e)\cdot[(B_1,B_2,i,j)]=[(t_1B_1,t_2B_2,ie^{-1},t_1t_2ej)].
\end{gather*}

Consider two positive coprime integers $\alpha$ and $\beta$ and a vector 
$$
\vec\omega=(\omega_1,\omega_2,\ldots,\omega_r)\in\mbZ^r
$$
such that $0\le\omega_i< \alpha+\beta$. Let $T_{\alpha,\beta}^{\vec\omega}$ be the one-dimensional subtorus of $(\mbC^*)^2\times(\mbC^*)^r$ defined by 
$$
T_{\alpha,\beta}^{\vec\omega}=\{(t^{\alpha},t^{\beta},t^{\omega_1},t^{\omega_2},\ldots,t^{\omega_r})\in (\mbC^*)^2\times(\mbC^*)^r|t\in\mbC^*\}.
$$
In \cite{BuryakFeigin} we studied the numbers of the irreducible components of $\mcM(r,n)^{T_{\alpha,\beta}^{\vec\omega}}$ and found an answer in the case $\alpha=\beta=1$. Now we can solve the general case.

We define the vector $\vec\rho=(\rho_0,\rho_1,\ldots,\rho_{\alpha+\beta-1})\in\mbZ_{\ge 0}^{\alpha+\beta}$ by $\rho_i=\sharp\{j|\omega_j=i\}$ and the vector $\vec\mu\in\mbZ_{\ge 0}^{\alpha+\beta}$ by 
$\mu_i=\rho_{-i\alpha\mmod\alpha+\beta}$. 

Let $E_k,F_k,H_k$, $k=1,2,\ldots,\alpha+\beta$, be the standard generators of $\widehat{sl}_{\alpha+\beta}$. Let $\mathcal V$ be the irreducible highest weight representation of $\widehat{sl}_{\alpha+\beta}$ with the highest weight $\vec\mu$. Let $x\in\mathcal V$ be the highest weight vector. We denote by $\mathcal V_p$ the vector subspace of $\mathcal V$ generated by vectors~$F_{i_1}F_{i_2}\ldots F_{i_p}x$. The character $\chi_{\vec\mu}(q)$ is defined by
$$
\chi_{\vec\mu}(q)=\sum_{p\ge 0}(\dim\mathcal V_p) q^p.
$$ 
We denote by $h_0(X)$ the number of connected components of a manifold~$X$. 

\begin{theorem}\label{theorem:irreducible components}
$$
\sum_{n\ge 0}h_0\left(\mcM(r,n)^{T^{\vec\omega}_{\alpha,\beta}}\right)q^n=\chi_{\vec\mu}(q).
$$
\end{theorem}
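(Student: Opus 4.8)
The plan is to reduce the statement to Nakajima's geometric realization of the irreducible highest weight $\widehat{sl}_m$-module $L(\vec\mu)$ (with $m=\alpha+\beta$) on the cohomology of the cyclic quiver varieties $\mfM(v,\vec\mu)$, using Theorem~\ref{firsttheorem} as the bridge between the fixed-point count and a Betti number. The first and principal step is a geometric identification of the fixed locus. I would analyze the condition that $[(B_1,B_2,i,j)]\in\mcM(r,n)$ be fixed by $T^{\vec\omega}_{\alpha,\beta}$: for every $t$ there must exist $g(t)\in GL_n(\mbC)$ intertwining the torus action with the gauge action, and $t\mapsto g(t)$ produces a $\mbZ$-grading $\mbC^n=\bigoplus_d\mbC^n_d$ on which $B_1$ raises the degree by $\alpha$ and $B_2$ by $\beta$, with $i,j$ constrained by $\vec\omega$. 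Reducing this grading modulo $m$, relabelling the vertices as dictated by $\lambda_k\equiv-\alpha k$, and setting $v_k=\sum_{d\equiv-\alpha k}\dim\mbC^n_d$ turns the fixed data into a cyclic quiver representation with $n=|v|=\sum_k v_k$, framing dimensions $\dim W_k=\mu_k$ (note $\sum_k\mu_k=\sum_k\rho_k=r$, so $L(\vec\mu)$ has level $r$), and the residual datum — the refinement of the mod-$m$ grading to the full $\mbZ$-grading — is exactly the condition of being $\widetilde T_{\alpha,\beta}$-fixed inside the quiver variety. I therefore expect an isomorphism of varieties, a decomposition into open and closed pieces,
\begin{gather*}
\mcM(r,n)^{T^{\vec\omega}_{\alpha,\beta}}\cong\bigsqcup_{|v|=n}\mfM(v,\vec\mu)^{\widetilde T_{\alpha,\beta}},\qquad\text{hence}\qquad h_0\!\left(\mcM(r,n)^{T^{\vec\omega}_{\alpha,\beta}}\right)=\sum_{|v|=n}h_0\!\left(\mfM(v,\vec\mu)^{\widetilde T_{\alpha,\beta}}\right).
\end{gather*}

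Next I would extract the relevant Betti number from Theorem~\ref{firsttheorem}. Since $\mfM(v,\vec\mu)^{\widetilde T_{\alpha,\beta}}$ is compact, $h_0$ is the coefficient of $q^0$ in its ordinary Poincare polynomial; comparing this with the lowest-order term of the right-hand side of Theorem~\ref{firsttheorem} (the shift being by $q^{\frac12\dim\mfM(v,\vec\mu)}$, with $\dim$ the complex dimension) gives
\begin{gather*}
h_0\!\left(\mfM(v,\vec\mu)^{\widetilde T_{\alpha,\beta}}\right)=\dim H^{BM}_{\dim_{\mbC}\mfM(v,\vec\mu)}\!\left(\mfM(v,\vec\mu)\right),
\end{gather*}
the dimension of the middle-degree Borel-Moore homology. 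I would then invoke Nakajima's construction: $\bigoplus_v H^{BM}_{\dim_{\mbC}\mfM(v,\vec\mu)}(\mfM(v,\vec\mu))$ (equivalently, the top Borel-Moore homology of the central fibres) carries the irreducible highest weight $\widehat{sl}_m$-module with highest weight $\sum_k\mu_k\Lambda_k=\vec\mu$, the summand indexed by $v$ being the weight space of weight $\vec\mu-\sum_k v_k\alpha_k$. Thus $h_0(\mfM(v,\vec\mu)^{\widetilde T_{\alpha,\beta}})=\dim L(\vec\mu)_{\vec\mu-\sum_k v_k\alpha_k}$.

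It remains to assemble the generating series. Under the principal specialization $e^{-\alpha_k}\mapsto q$ (for every $k=0,\dots,m-1$), the weight $\vec\mu-\sum_k v_k\alpha_k$ is recorded by $q^{\sum_k v_k}=q^{|v|}=q^n$; since the simple roots $\alpha_0,\dots,\alpha_{m-1}$ are linearly independent, distinct $v$ give distinct weights, and this is exactly the convention under which $\chi_{\vec\mu}$ is a power series in $q$ (consistent with the $r=1$ product formula, which is the $H_0$-part of Theorem~\ref{theorem:quasihomogeneous}). Combining the three displays,
\begin{gather*}
\sum_{n\ge0}h_0\!\left(\mcM(r,n)^{T^{\vec\omega}_{\alpha,\beta}}\right)q^n=\sum_{n\ge0}\sum_{|v|=n}\dim L(\vec\mu)_{\vec\mu-\sum_k v_k\alpha_k}\,q^n=\sum_{v}\dim L(\vec\mu)_{\vec\mu-\sum_k v_k\alpha_k}\,q^{|v|}=\chi_{\vec\mu}.
\end{gather*}

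The main obstacle I expect is the geometric identification of the first paragraph. One must check that the $\mbZ$-grading produced by the one-parameter subgroup matches, after reduction modulo $m$, the cyclic-quiver orientation with $B_1,B_2$ shifting by $\alpha,\beta$; that the reindexing $\mu_i=\rho_{-i\alpha\bmod m}$ of the framing is forced by the weights $\lambda_k\equiv-\alpha k$ defining $\widetilde T_{\alpha,\beta}$; and, above all, that the residual $\mbZ$-refinement is \emph{faithfully} the condition of $\widetilde T_{\alpha,\beta}$-invariance, so that the two fixed-point problems genuinely coincide and the decomposition is by open-closed pieces (this is the point where stability conditions on both sides must be reconciled). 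A secondary point needing care is pinning down exactly which homological degree Theorem~\ref{firsttheorem} isolates, and confirming that it is the middle Borel-Moore homology of $\mfM(v,\vec\mu)$ that computes the weight space — i.e. matching the normalization of Nakajima's theorem with the shift $q^{\frac12\dim\mfM(v,\vec\mu)}$.
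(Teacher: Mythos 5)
Your proposal follows essentially the same route as the paper's proof: the paper identifies $\mcM(r,n)^{T^{\vec\omega}_{\alpha,\beta}}$ with $\coprod_{|v|=n}\mfM(v,\vec\mu)^{\widetilde T_{\alpha,\beta}}$ via the analogue of Lemma~\ref{lemma:finite group}, applies Theorem~\ref{firsttheorem} to turn $h_0$ of the compact fixed locus into the middle-degree Borel--Moore Betti number of $\mfM(v,\vec\mu)$, and invokes Nakajima's realization of $L(\vec\mu)$ on $\bigoplus_v H^{BM}_{\frac12\dim\mfM(v,\vec\mu)}(\mfM(v,\vec\mu))$, exactly as you do. Your write-up is correct and simply makes explicit two points the paper leaves implicit: the precise homological degree isolated by the $q^{\frac12\dim\mfM(v,\vec\mu)}$ shift, and the principal specialization under which $\chi_{\vec\mu}$ is read as a power series in $q$.
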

In \cite{Jimbo} the authors found a combinatorial formula for characters of $\widehat{sl}_m$ in terms of Young diagrams with certain restrictions. In \cite{FFJMM} the same combinatorics is used to give a formula for certain characters of the quantum continuous $gl_\infty$. Comparing these two combinatorial formulas it is easy to see that Conjecture 1.2 from~\cite{BuryakFeigin} follows from Theorem \ref{theorem:irreducible components}. 
\begin{remark}
There is a small mistake in Conjecture 1.2 from~\cite{BuryakFeigin}. The vector $\vec {a'}=(a_0',a_1',\ldots,a_{\alpha+\beta-1}')$ should be defined by $a'_i=a_{-\alpha i\mmod \alpha+\beta}$. The rest is correct.
\end{remark}

\subsection{Organization of the paper} 

We prove Theorem~\ref{firsttheorem} in Section~\ref{section:first theorem}. Then using this result we prove Theorem~\ref{theorem:quasihomogeneous} in~Section~\ref{section:quasihomogeneous}. In Section~\ref{section:combinatorial identities} we derive the combinatorial identities as a corollary of Theorem~\ref{theorem:quasihomogeneous}. Finally, using Theorem~\ref{firsttheorem} we prove Theorem~\ref{theorem:irreducible components} in Section~\ref{section:irreducible components}. 

\subsection{Acknowledgments}

The authors are grateful to S. M. Gusein-Zade, M. Finkelberg and S. Shadrin for useful discussions. 

A. B. is partially supported by a Vidi grant of the Netherlands Organization of Scientific Research, by the grants RFBR-10-01-00678, NSh-4850.2012.1 and the Moebius Contest Foundation for Young Scientists. Research of B. F. is partially supported by RFBR initiative interdisciplinary project grant 09-02-12446-ofi-m, by RFBR-CNRS grant 09-02-93106, RFBR grants 08-01-00720-a, NSh-3472.2008.2 and 07-01-92214-CNRSL-a.

\section{Proof of Theorem \ref{firsttheorem}}\label{section:first theorem}

In this section we prove Theorem~\ref{firsttheorem}. The Grothendieck ring of quasiprojective varieties is a useful technical tool and we remind its definition and necessary properties in Section~\ref{subsection:Grothendieck}.     

\subsection{Grothendieck ring of quasiprojective varieties} \label{subsection:Grothendieck}

The Grothendieck ring $K_0(\nu_{\mbC})$ of complex quasiprojective varieties is the abelian group generated by the classes $[X]$ of all complex quasiprojective varieties $X$ modulo the relations:
\begin{enumerate}
\item if varieties $X$ and $Y$ are isomorphic, then $[X]=[Y]$;
\item if $Y$ is a Zariski closed subvariety of $X$, then $[X]=[Y]+[X\backslash Y]$.
\end{enumerate}  
The multiplication in $K_0(\nu_{\mbC})$ is defined by the Cartesian product of varieties: $[X_1]\cdot[X_2]=[X_1\times X_2]$. The class $\left[\mathbb A^1_{\mbC}\right]\in K_0(\nu_{\mbC})$ of the complex affine line is denoted by $\mbL$.

We need the following property of the ring $K_0(\nu_{\mbC})$. There is a natural homomorphism of rings $\theta\colon\mbZ[z]\to K_0(\nu_{\mbC})$, defined by $\theta(z)=\mbL$. This homorphism is an inclusion (see e.g.\cite{Looijenga}).

\subsection{Proof of Theorem \ref{firsttheorem}} 

Let $r=\sum_{i=0}^{m-1}\theta_i$. For an arbitrary $\vec\nu\in\mbZ^r$ let $\Gamma^{\vec\nu}_{\alpha,\beta}\subset T_{\alpha,\beta}^{\vec\nu}$ be the subgroup of roots of $1$ of degree $m$.  
Let
$$
\vec\theta=(\underbrace{0,\ldots,0}_{\text{$w_0$ times}},\underbrace{\lambda_1,\ldots,\lambda_1}_{\text{$w_1$ times}},\ldots,\underbrace{\lambda_{m-1},\ldots,\lambda_{m-1}}_{\text{$w_{m-1}$ times}})\in\mbZ^{r}
$$
\begin{lemma}\label{lemma:finite group}
1. We have the following decomposition into irreducible components
\begin{gather}\label{equation:lemma1}
\mcM(r,n)^{\Gamma_{\alpha,\beta}^{\vec\theta}}=\coprod_{\substack{v\in\Z_{\ge 0}^m\\\sum v_k=n}}\mfM(v,w).
\end{gather}
2.The $T^{\vec\theta}_{\alpha,\beta}$-action on the left-hand side of~\eqref{equation:lemma1} corresponds to the $\widetilde T_{\alpha,\beta}$-action on the right-hand side of~\eqref{equation:lemma1}.
\end{lemma}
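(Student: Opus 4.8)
The plan is to analyze the fixed-point condition explicitly and match it with the cyclic quiver data, vertex by vertex. Write $\Gamma=\Gamma_{\alpha,\beta}^{\vec\theta}\cong\mbZ/m\mbZ$ and fix a tuple $(B_1,B_2,i,j)$ representing a stable point of $\mcM(r,n)$. For a root of unity $\zeta\in\Gamma$, the point $[(B_1,B_2,i,j)]$ is $\zeta$-fixed iff there is $g=g(\zeta)\in GL_n(\mbC)$ with $\zeta^\alpha B_1=gB_1g^{-1}$, $\zeta^\beta B_2=gB_2g^{-1}$, $ie^{-1}=gi$ and $\zeta^{\alpha+\beta}ej=jg^{-1}$, where $e=\mathrm{diag}(\zeta^{\theta_1},\dots,\zeta^{\theta_r})$. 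First I would use that $GL_n(\mbC)$ acts freely on the stable locus: this makes $g(\zeta)$ unique, and since the $GL_n(\mbC)$-action commutes with the torus action on the space of tuples, uniqueness forces $\zeta\mapsto g(\zeta)$ to be a genuine homomorphism $\Gamma\to GL_n(\mbC)$. Hence $\mbC^n$ becomes a representation of $\mbZ/m\mbZ$ and decomposes into weight spaces $\mbC^n=\bigoplus_{k\in\mbZ/m\mbZ}V_k$, where $g(\zeta)$ acts on $V_k$ by $\zeta^k$; because $\zeta$ is primitive, reading off eigenvalue multiplicities recovers the dimension vector $v=(\dim V_0,\dots,\dim V_{m-1})$ unambiguously, with $\sum_k\dim V_k=n$.

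Next I would decode the remaining equations as grading constraints. Since $\zeta^{\alpha+\beta}=\zeta^m=1$, the last relation simplifies to $ej=jg^{-1}$, and the four relations say exactly that $B_1$ raises the weight by $\alpha$, $B_2$ raises it by $\beta\equiv-\alpha\pmod m$, $i$ sends the weight-$\lambda_k$ part of $\mbC^r$ into $V_{-\lambda_k}$, and $j$ sends $V_{-\lambda_k}$ into that same framing piece. Because $\gcd(\alpha,m)=1$, the substitution $l\mapsto\alpha l\bmod m$ is a bijection of $\mbZ/m\mbZ$; relabelling $\widetilde V_l:=V_{\alpha l\bmod m}$ turns $B_1,B_2$ into the arrows $\widetilde V_l\to\widetilde V_{l+1}$ and $\widetilde V_l\to\widetilde V_{l-1}$ of the cyclic quiver, while the definition $\lambda_k\equiv-\alpha k$ together with the multiplicities $w_k$ built into $\vec\theta$ identifies the weight-$\lambda_k$ framing space with $W_k$ and makes $i,j$ into the maps $W_l\to\widetilde V_l$ and $\widetilde V_l\to W_l$. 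The relation $[B_1,B_2]+ij=0$ is automatically graded (both terms preserve each $\widetilde V_l$), so it is equivalent to the quiver equation $\mu(B,i,j)=0$ componentwise. For stability I would observe that the minimal $B$-invariant subspace of $\mbC^n$ containing $\im(i)$ is automatically graded, since $\im(i)$ is graded and $B_1,B_2$ are graded maps; consequently the plane-moduli stability condition and the quiver stability condition both assert that this single subspace is all of $\mbC^n$, and hence coincide.

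Finally I would match the quotients to upgrade this pointwise dictionary to the isomorphism of varieties in \eqref{equation:lemma1}, and then track the residual torus. Assembling a cyclic quiver representation into $\mbC^n=\bigoplus_k\widetilde V_k$ produces a $\Gamma$-fixed stable point, giving a morphism $\coprod_v\mfM(v,w)\to\mcM(r,n)^{\Gamma}$. Injectivity is the step I expect to be the main obstacle: I would argue that if $h\in GL_n(\mbC)$ carries one assembled point to another, then by uniqueness of the implementing homomorphism $g$ the element $h$ must conjugate the standard grading operator to itself, hence commute with it and be block-diagonal, i.e.\ $h\in G_v$; this also forces the two dimension vectors to agree, via equal eigenvalue multiplicities of $g(\zeta)$. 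Surjectivity is the construction above, so the map is a bijection, and being an algebraic bijection of smooth varieties it is an isomorphism, proving part~1. For part~2 the finite group $\Gamma$ sits inside $T^{\vec\theta}_{\alpha,\beta}$ with quotient $\mbC^*$, which acts on the fixed locus; a direct substitution of $(t^\alpha,t^\beta,t^{\theta_1},\dots,t^{\theta_r})$ into the $\mcM(r,n)$-action and comparison, block by block in the weight decomposition, with $\widetilde T_{\alpha,\beta}=(t^\alpha,t^\beta,t^{\lambda_0},\dots,t^{\lambda_{m-1}})$ acting on $\mfM(v,w)$ shows that the two actions coincide, using $t^{\theta}|_{W_k}=t^{\lambda_k}$ and $t^{\alpha+\beta}=t^m$ on both sides. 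This is part~2.
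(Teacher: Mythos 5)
Your proof is correct and takes essentially the same route as the paper's: both characterize a $\Gamma^{\vec\theta}_{\alpha,\beta}$-fixed point by the existence of an implementing element of $GL_n(\mbC)$ (unique by freeness, hence a homomorphism of $\mbZ/m\mbZ$ into $GL_n(\mbC)$), decompose $\mbC^n$ and $\mbC^r$ into its eigenspaces, read off the graded structure of $B_1,B_2,i,j$, and relabel the weights via $k\mapsto-\alpha k\mmod m$ to obtain the cyclic quiver data and the matching of the torus actions. The paper's own proof is terser---it leaves the uniqueness/homomorphism point, the equivalence of the two stability conditions, and the identification of the $G_v$- and $GL_n(\mbC)$-quotients implicit---so your write-up simply supplies details the authors omitted.
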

\begin{proof}
Let $\Gamma_m$ be the group of roots of unity of degree $m$. By definition, a point $[(B_1,B_2,i,j)]\in\mcM(r,n)$ is fixed under the action of ${\Gamma^{\vec\theta}_{\alpha,\beta}}$ if and only if there exists a homomorphism $\lambda\colon\Gamma_m\to GL_n(\mbC)$ satisfying the following conditions: 
\begin{align}\label{formula:fixed point}
\zeta^{\alpha}B_1&=\lambda(\zeta)^{-1}B_1\lambda(\zeta),\notag\\
\zeta^{\beta}B_2&=\lambda(\zeta)^{-1}B_2\lambda(\zeta),\\
i\circ diag(\zeta^{\theta_1},\zeta^{\theta_2},\ldots,\zeta^{\theta_r})^{-1}&=\lambda(\zeta)^{-1}i,\notag\\
diag(\zeta^{\theta_1},\zeta^{\theta_2},\ldots,\zeta^{\theta_r})\circ j&=j\lambda(\zeta),\notag
\end{align} 
where $\zeta=e^{\frac{2\pi\sqrt{-1}}{m}}$.
Suppose that $[(B_1,B_2,i,j)]$ is a fixed point. Then we have the weight decomposition of $\mbC^n$ with respect to $\lambda(\zeta)$, i.e. $\mbC^n=\bigoplus_{k\in\mbZ/m\mbZ} V'_k$, where $V'_k=\{v\in \mbC^n|\lambda(\zeta)\cdot v=\zeta^kv\}$. We also have the weight decomposition of $\mbC^r$, i.e. $\mbC^r=\bigoplus_{k\in\mbZ/m\mbZ} W'_k$, where $W'_k=\{v\in \mbC^r|diag(\zeta^{\theta_1},\ldots,\zeta^{\theta_r})\cdot v=\zeta^kv\}$. From conditions \eqref{formula:fixed point} it follows that the only components of $B_1$, $B_2$, $i$ and $j$ that might survive are: 
\begin{align*}
B_1&\colon V'_k\to V'_{k-\alpha},\\
B_2&\colon V'_k\to V'_{k-\beta},\\
i&\colon W'_k\to V'_k,\\
j&\colon V'_k\to W'_k.
\end{align*}
Let us denote $V'_{-\alpha k\mmod m}$ by $V_k$ and $W'_{-\alpha k\mmod m}$ by $W_k$. Then the operators $B_1,B_2,i,j$ act as follows: $B_{1,2}\colon V_k\to V_{k\pm 1}, i\colon W_k\to V_k, j\colon V_k\to W_k$. The first part of the lemma is proved. The second part of the lemma easily follows from the proof of the first part and from the definition of the $\widetilde T_{\alpha,\beta}$-action.  
\end{proof}

In \cite{BuryakFeigin} it is proved that the variety $\mcM(r,n)^{T^{\vec\theta}_{\alpha,\beta}}$ is compact. Therefore, $\mfM(v,w)^{\widetilde T_{\alpha,\beta}}$ is compact.

We denote by $\mcM(r,n)^{\Gamma^{\vec\theta}_{\alpha,\beta}}_v$ the irreducible component of $\mcM(r,n)^{\Gamma^{\vec\theta}_{\alpha,\beta}}$ corresponding to $\mfM(v,w)$. Let $\mcM(r,n)^{T^{\vec\theta}_{\alpha,\beta}}_v=\left(\mcM(r,n)^{\Gamma^{\vec\theta}_{\alpha,\beta}}_v\right)^{T^{\vec\theta}_{\alpha,\beta}}$. We denote by $I_v$ the set of irreducible components of $\mcM(r,n)^{T^{\vec\theta}_{\alpha,\beta}}_v$ and let $\mcM(r,n)^{T^{\vec\theta}_{\alpha,\beta}}_v=\coprod\limits_{i\in I_v}\mcM(r,n)^{T^{\vec\theta}_{\alpha,\beta}}_{v,i}$ be the decomposition into the irreducible components. We define the sets $C_{v,i}$ by 
$$
C_{v,i}=\left\{z\in\mcM(r,n)^{\Gamma^{\vec\theta}_{\alpha,\beta}}_v\left|\lim_{t\to 0,t\in T_{\alpha,\beta}^{\vec\theta}}tz\in \mcM(r,n)^{T^{\vec\theta}_{\alpha,\beta}}_{v,i}\right.\right\}.
$$
\begin{lemma}\label{lemma:decomposition}
1) The sets $C_{v,i}$ form a decomposition of $\mcM(r,n)^{\Gamma^{\vec\theta}_{\alpha,\beta}}_v$ into locally closed subvarieties.\\
2) The subvariety $C_{v,i}$ is a locally trivial bundle over $\mcM(r,n)^{T^{\vec\theta}_{\alpha,\beta}}_{v,i}$ with an affine space as a fiber.
\end{lemma}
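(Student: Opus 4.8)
The plan is to recognize the statement as an instance of the Bialynicki-Birula decomposition for the smooth quasiprojective variety $\mcM(r,n)^{\Gamma^{\vec\theta}_{\alpha,\beta}}_v$. By Lemma~\ref{lemma:finite group} this variety is isomorphic to the quiver variety $\mfM(v,w)$ and is therefore nonsingular, and under this isomorphism the one-dimensional torus $T^{\vec\theta}_{\alpha,\beta}\cong\mbC^*$ acts as $\widetilde T_{\alpha,\beta}$. The sets $\mcM(r,n)^{T^{\vec\theta}_{\alpha,\beta}}_{v,i}$ are precisely the connected components of the fixed locus of this $\mbC^*$-action, and each of them is smooth as a component of the fixed locus of a torus acting on a smooth variety. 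The two assertions of the lemma are then exactly the conclusions of the Bialynicki-Birula theorem, so the whole task reduces to verifying its hypotheses for a variety that is smooth but not complete: the only missing input is that the limit $\lim_{t\to 0}t\cdot z$ exists for every point $z$.

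The key step, and the only nontrivial one, is the existence of these limits. I would deduce it from the projective morphism $\pi\colon\mfM(v,w)\to\mfM_0(v,w)$ onto the affine quotient $\mfM_0(v,w)=\mu^{-1}(0)/\!\!/G_v$. The coordinate ring of $\mfM_0(v,w)$ is generated by traces of words in $B_1,B_2$ and by matrix elements of compositions of the form $j_a(\text{word in }B_1,B_2)i_b$. Under the $\widetilde T_{\alpha,\beta}$-action the operators $B_1,B_2$ are rescaled by $t^\alpha,t^\beta$ with $\alpha,\beta\ge 1$, while the weights on $i_k$ and $j_k$ are $-\lambda_k\ge 0$ and $m+\lambda_k\ge 1$ respectively; a short bookkeeping then shows that every non-constant generator is acted on with strictly positive weight. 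Consequently $\lim_{t\to 0}t\cdot\pi(z)$ exists and equals the cone point of $\mfM_0(v,w)$ for every $z$, and since $\pi$ is proper the orbit map $t\mapsto t\cdot z$ extends over $t=0$, giving the desired limit in $\mfM(v,w)$.

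With smoothness and existence of limits in hand, the Bialynicki-Birula theorem applies directly. It gives that the attracting sets $C_{v,i}$ are locally closed and that they partition $\mcM(r,n)^{\Gamma^{\vec\theta}_{\alpha,\beta}}_v$ according to which fixed component the limit lands in, which is assertion~(1); and that the map $C_{v,i}\to\mcM(r,n)^{T^{\vec\theta}_{\alpha,\beta}}_{v,i}$ sending $z$ to $\lim_{t\to 0}t\cdot z$ is a Zariski-locally-trivial fibration whose fibers are affine spaces, $C_{v,i}$ being identified with the subbundle of $T\mfM(v,w)|_{\mcM(r,n)^{T^{\vec\theta}_{\alpha,\beta}}_{v,i}}$ spanned by the positive weight spaces of the $\mbC^*$-action, which is assertion~(2). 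The main obstacle is thus the properness-and-contraction argument of the second paragraph; once the limits are known to exist, the remainder is the standard fixed-point-decomposition machinery.
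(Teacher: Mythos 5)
Your proposal is correct and follows essentially the same route as the paper: reduce to the Bialynicki-Birula theorem, with the only nontrivial point being existence of the limits, which is established via the projective equivariant morphism to the affine quotient, whose coordinate ring (generated, after Lusztig, by traces of words in $B_1,B_2$ and by matrix elements of $j(\text{word})i$) carries only strictly positive weights, so every point contracts to the cone point. The sole cosmetic difference is that you run the contraction argument on $\mfM_0(v,w)$, while the paper uses $\pi\colon\mcM(r,n)\to\mcM_0(r,n)$ with the weight bound $-m<\theta_k\le 0$ and then restricts to the component $\mcM(r,n)^{\Gamma^{\vec\theta}_{\alpha,\beta}}_v$.
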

\begin{proof}
The lemma follows from the results of \cite{B1,B2}. The only thing that we need to check is that the limit $\lim_{t\to 0,t\in T_{\alpha,\beta}^{\vec\theta}}tz$ exists for any $z\in\mcM(r,n)^{\Gamma^{\vec\theta}_{\alpha,\beta}}_v$. 

Consider the variety $\mcM_0(r,n)$ from \cite{Nakajima2}. It is defined as the affine algebro-geometric quotient 
$$
\mcM_0(r,n)=\{(B_1,B_2,i,j)|[B_1,B_2]+ij=0\}//GL_n(\mbC).
$$
It can be viewed as the set of closed orbits in $\{(B_1,B_2,i,j)|[B_1,B_2]+ij=0\}$. There is a morphism $\pi\colon\mcM(r,n)\to\mcM_0(r,n)$. It maps a point $[(B_1,B_2,i,j)]\in \mcM(r,n)$ to the unique closed orbit that is contained in the closure of the orbit of $(B_1,B_2,i,j)$ in $\{(B_1,B_2,i,j)|[B_1,B_2]+ij=0\}$. The $(\mbC^*)^2\times(\mbC^*)^r$-action on $\mcM_0(r,n)$ is defined in the same way as on $\mcM(r,n)$. The variety $\mcM_0(r,n)$ is affine and the morphism $\pi$ is projective and equivariant (see e.g.\cite{Nakajima2}).
 
By \cite{Lusztig}, the coordinate ring of $\mcM_0(r,n)$ is generated by the following two types of functions:
\begin{itemize}
\item[a)]
$tr(B_{a_N}B_{a_{N-1}}\cdots B_{a_1}\colon\mbC^n\to\mbC^n)$, where $a_i=1$ or $2$.
\item[b)]
$\chi(jB_{a_N}B_{a_{N-1}}\cdots B_{a_1}i)$, where $a_i=1$ or $2$, and $\chi$ is a linear form on $End(\mbC^r)$.
\end{itemize}

From the inequalities $-m<\theta_k\le 0$ it follows that both types of functions have positive weights with respect to the $T^{\vec\theta}_{\alpha,\beta}$-action. Therefore, for any point $z\in\mcM_0(r,n)$ we have $\lim_{t\to 0,t\in T^{\vec\theta}_{\alpha,\beta}}tz=0$. The morphism $\pi$ is projective, so  the limit $\lim_{t\to 0,t\in T_{\alpha,\beta}^{\vec\theta}}tz$ exists for any $z\in\mcM(r,n)^{\Gamma^{\vec\theta}_{\alpha,\beta}}_v$. The lemma is proved.
\end{proof}

Denote by $d_{v,i}^+$ the dimension of the fiber of the locally trivial bundle $C_{v,i}\to\mcM(r,n)^{T^{\vec\theta}_{\alpha,\beta}}_{v,i}$. 
\begin{lemma}\label{lemma:dimension}
The dimension $d_{v,k}^+$ doesn't depend on $k\in I_v$ and is equal to
$$
d_{v,k}^+=\frac{1}{2}\dim\mfM(v,w).
$$
\end{lemma}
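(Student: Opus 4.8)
The plan is to compute $d^+_{v,k}$ as the dimension of the positive-weight part of the tangent space $T_z\mfM(v,w)$ at a fixed point $z\in\mcM(r,n)^{T^{\vec\theta}_{\alpha,\beta}}_{v,k}$, and to extract the value $\tfrac12\dim\mfM(v,w)$ from two structural features of this $\widetilde T_{\alpha,\beta}$-representation: a divisibility of weights and a symplectic self-duality. By the Bialynicki--Birula theory invoked in Lemma~\ref{lemma:decomposition}, the fiber dimension $d^+_{v,k}$ equals $\dim T_z^{>0}$, the sum of dimensions of the strictly positive weight spaces, where $T_z\mfM(v,w)=\bigoplus_a T_z^{(a)}$ is the weight decomposition under $\widetilde T_{\alpha,\beta}$; the weight-zero part $T_z^{(0)}$ is the tangent space to the fixed component, of dimension $\dim\mcM(r,n)^{T^{\vec\theta}_{\alpha,\beta}}_{v,k}$.

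First I would establish the divisibility statement: every $\widetilde T_{\alpha,\beta}$-weight on $T_z\mfM(v,w)$ is divisible by $m=\alpha+\beta$. This is where Lemma~\ref{lemma:finite group} does the work. Since $\mfM(v,w)$ is an irreducible component of the fixed locus $\mcM(r,n)^{\Gamma^{\vec\theta}_{\alpha,\beta}}$ of the finite cyclic group $\Gamma^{\vec\theta}_{\alpha,\beta}$ of $m$-th roots of unity inside $T^{\vec\theta}_{\alpha,\beta}$, and $\mcM(r,n)$ is smooth, we have $T_z\mfM(v,w)=\left(T_z\mcM(r,n)\right)^{\Gamma^{\vec\theta}_{\alpha,\beta}}$. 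A tangent vector of $\widetilde T_{\alpha,\beta}$-weight $a$ is acted on by the generator $\zeta=e^{2\pi\sqrt{-1}/m}$ of $\Gamma^{\vec\theta}_{\alpha,\beta}$ by $\zeta^a$, so it is $\Gamma^{\vec\theta}_{\alpha,\beta}$-invariant exactly when $m\mid a$. Hence $T_z^{(a)}=0$ unless $m\mid a$; in particular there are no weights in the open interval $(0,m)$.

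Next I would use the holomorphic symplectic form $\omega$ on $\mfM(v,w)$. From the defining $\widetilde T_{\alpha,\beta}$-action one reads off that $\omega$, which pairs $B_1$ with $B_2$ and $i$ with $j$, is an eigenform of weight $t_1t_2=t^{\alpha+\beta}=t^m$. Nondegeneracy of $\omega$ then forces $\dim T_z^{(a)}=\dim T_z^{(m-a)}$ for all $a$. Writing $d_j=\dim T_z^{(mj)}$ (all other weight spaces vanishing by the divisibility step), this symmetry reads $d_j=d_{1-j}$. A telescoping sum gives
\[
d^+_{v,k}-d^-_{v,k}=\sum_{j\ge 1}d_j-\sum_{j\le -1}d_j=d_1=d_0=\dim T_z^{(0)},
\]
where $d^-_{v,k}=\dim T_z^{<0}$. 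Combining this with $d^+_{v,k}+d^-_{v,k}+\dim T_z^{(0)}=\dim\mfM(v,w)$ yields $2d^+_{v,k}=\dim\mfM(v,w)$. Since the resulting value $\tfrac12\dim\mfM(v,w)$ is manifestly independent of the component index $k$, and the argument applies verbatim at any fixed point of any component, this simultaneously proves the claimed independence of $k$.

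The only genuinely delicate point is the divisibility step, and I expect it to be the crux: it hinges on the precise identification in Lemma~\ref{lemma:finite group} of $\mfM(v,w)$ with a component of the $\Gamma^{\vec\theta}_{\alpha,\beta}$-fixed locus, together with the matching of the $\widetilde T_{\alpha,\beta}$-action with the $T^{\vec\theta}_{\alpha,\beta}$-action, so that the relevant one-parameter subgroup literally contains the cyclic group whose invariants cut out $\mfM(v,w)$. Once the weights are known to be multiples of $m$, the symplectic symmetry and the dimension bookkeeping are routine.
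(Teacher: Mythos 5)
Your proof is correct, and it takes a genuinely different route from the paper's. The paper chooses in each component $\mcM(r,n)^{T^{\vec\theta}_{\alpha,\beta}}_{v,k}$ a fixed point $p$ of the full torus $(\mbC^*)^2\times(\mbC^*)^r$ (such a point exists because the fixed locus is compact) and computes $d^+_{v,k}$ explicitly from the character formula \eqref{weight decomposition}: the tangent weights at $p$ are expressed through arms and legs of an $r$-tuple of Young diagrams, $d^+_{v,k}$ is the number of those weights which are divisible by $m$ and positive, and a counting manipulation --- rewriting the second family of terms of \eqref{weight decomposition} through the first via the substitution $w\mapsto m-w$, and noting that no weight divisible by $m$ lies strictly between $0$ and $m$ --- identifies this count with half the total number of weights divisible by $m$, i.e.\ with $\frac12\dim\mcM(r,n)^{\Gamma^{\vec\theta}_{\alpha,\beta}}_v$. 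Your two structural inputs are exactly the coordinate-free content of that manipulation: the involution $w\mapsto m-w$ is the shadow of the weight-$m$ symplectic pairing, and the divisibility constraint is your observation that $T_z\mfM(v,w)=\bigl(T_z\mcM(r,n)\bigr)^{\Gamma^{\vec\theta}_{\alpha,\beta}}$ kills all weights not divisible by $m$. What your route buys: it works at an arbitrary $T^{\vec\theta}_{\alpha,\beta}$-fixed point, so you never need a big-torus fixed point in each component (and hence no appeal to compactness at this step), you avoid the Young-diagram combinatorics entirely, and the argument generalizes to any smooth holomorphic symplectic variety with a $\mbC^*$-action whose symplectic form has weight $m$ and whose tangent weights at fixed points are multiples of $m$. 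What the paper's route buys: it is self-contained given \eqref{weight decomposition}, and it never needs the (true, but additional) geometric facts that $\mfM(v,w)$ is holomorphic symplectic and that its form has $\widetilde T_{\alpha,\beta}$-weight exactly $m$. All your individual steps --- the identification of $d^+_{v,k}$ with $\dim T_z^{>0}$ via Bialynicki--Birula theory as set up in Lemma~\ref{lemma:decomposition}, the divisibility via Lemma~\ref{lemma:finite group}, the perfect pairing $T_z^{(a)}\times T_z^{(m-a)}\to\mbC$, and the bookkeeping $d^+_{v,k}-d^-_{v,k}=d_0$ combined with $d^+_{v,k}+d^-_{v,k}+d_0=\dim\mfM(v,w)$ --- are sound.
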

\begin{proof}
The set of fixed points of the $(\mbC^*)^2\times(\mbC^*)^r$-action on $\mcM(r,n)$ is finite and is parametrized by the set of $r$-tuples $D=(D_1,D_2,\ldots,D_r)$ of Young diagrams $D_i$ such that $\sum_{i=1}^r|D_i|=n$ (see~e.g.\cite{Nakajima2}).  

Let $p\in\mcM(r,n)^{(\mbC^*)^2\times(\mbC^*)^r}$ be the fixed point corresponding to an $r$-tuple $D$. Let $R((\mbC^*)^2\times(\mbC^*)^r)=\Z[t_1^{\pm 1},t_2^{\pm 1},e_1^{\pm 1},e_2^{\pm 1},\ldots,e_r^{\pm 1}]$ be the representation ring of $(\mbC^*)^2\times(\mbC^*)^r$. Then the weight decomposition of the tangent space $T_p\mcM(r,n)$ of the variety $\mcM(r,n)$ at the point $p$ is given by~(see~e.g.\cite{Nakajima2})
\begin{gather}\label{weight decomposition}
T_p\mcM(r,n)=\sum_{i,j=1}^r e_j e_i^{-1}\left(\sum_{s\in D_i}t_1^{-l_{D_j}(s)}t_2^{a_{D_i}(s)+1}+\sum_{s\in D_j}t_1^{l_{D_i}(s)+1}t_2^{-a_{D_j}(s)}\right).
\end{gather}

For a computation of $d_{v,k}^+$ we choose an arbitrary $(\mbC^*)^2\times(\mbC^*)^r$-fixed point $p$ in $\mcM(r,n)^{T^{\vec\theta}_{\alpha,\beta}}_{v,k}$. Let $D$ be the corresponding $r$-tuple of Young diagrams. We have
\begin{align*}
d_{v,k}^+=&\sum_{i,j}\sharp\left\{s\in D_i\left|
\begin{smallmatrix}
\theta_j-\theta_i-\alpha l_{D_j}(s)+\beta (a_{D_i}(s)+1)\equiv 0\mmod m\\
\theta_j-\theta_i-\alpha l_{D_j}(s)+\beta (a_{D_i}(s)+1)>0
\end{smallmatrix}
\right.\right\}\\
&+\sum_{i,j}\sharp\left\{s\in D_j\left|
\begin{smallmatrix}
\theta_j-\theta_i+\alpha (l_{D_i}(s)+1)-\beta a_{D_j}(s)\equiv 0\mmod m\\
\theta_j-\theta_i+\alpha (l_{D_i}(s)+1)-\beta a_{D_j}(s)>0
\end{smallmatrix}
\right.\right\}\\
=&\sum_{i,j}\sharp\left\{s\in D_i\left|
\begin{smallmatrix}
\theta_j-\theta_i-\alpha l_{D_j}(s)+\beta (a_{D_i}(s)+1)\equiv 0\mmod m\\
\theta_j-\theta_i-\alpha l_{D_j}(s)+\beta (a_{D_i}(s)+1)>0
\end{smallmatrix}
\right.\right\}\\
&+\sum_{i,j}\sharp\left\{s\in D_i\left|
\begin{smallmatrix}
\theta_j-\theta_i-\alpha l_{D_j}(s)+\beta (a_{D_i}(s)+1)\equiv 0\mmod m\\
\theta_j-\theta_i-\alpha l_{D_j}(s)+\beta (a_{D_i}(s)+1)<m
\end{smallmatrix}
\right.\right\}\\
=&\sum_{i,j}\sharp\left\{s\in D_i|
\theta_j-\theta_i-\alpha l_{D_j}(s)+\beta (a_{D_i}(s)+1)\equiv 0\mmod m\right\}\\
&+\sum_{i,j}\sharp\left\{s\in D_i\left|
\begin{smallmatrix}
\theta_j-\theta_i-\alpha l_{D_j}(s)+\beta (a_{D_i}(s)+1)\equiv 0\mmod m\\
0<\theta_j-\theta_i-\alpha l_{D_j}(s)+\beta (a_{D_i}(s)+1)<m
\end{smallmatrix}
\right.\right\}.
\end{align*}
It is easy to see that the last sum is equal to zero, thus
\begin{gather*}
d_{v,k}^+=\sum_{i,j}\sharp\left\{s\in D_i|\theta_j-\theta_i-\alpha l_{D_j}(s)+\beta (a_{D_i}(s)+1)\equiv 0\mmod m\right\}.
\end{gather*}
On the other hand
\begin{align*}
&\dim\mcM(r,n)^{\Gamma^{\vec\theta}_{\alpha,\beta}}_v=\\
=&\sum_{i,j}\sharp\left\{s\in D_i|\theta_j-\theta_i-\alpha l_{D_j}(s)+\beta (a_{D_i}(s)+1)\equiv 0\mmod m\right\}\\
&+\sum_{i,j}\sharp\left\{s\in D_j|\theta_j-\theta_i+\alpha (l_{D_i}(s)+1)-\beta a_{D_j}(s)\equiv 0\mmod m\right\}\\
=&2\sum_{i,j}\sharp\left\{s\in D_i|\theta_j-\theta_i-\alpha l_{D_j}(s)+\beta (a_{D_i}(s)+1)\equiv 0\mmod m\right\}.
\end{align*}
Hence $d_{v,k}^+=\frac{1}{2}\dim\mcM(r,n)^{\Gamma^{\vec\theta}_{\alpha,\beta}}_v=\frac{1}{2}\dim\mfM(v,w)$.
\end{proof}

From Lemmas~\ref{lemma:decomposition} and \ref{lemma:dimension} it follows that
\begin{gather*}
\left[\mcM(r,n)^{\Gamma^{\vec\theta}_{\alpha,\beta}}_v\right]=\mbL^{\frac{1}{2}\dim\mfM(v,w)}\left[\mcM(r,n)^{T^{\vec\theta}_{\alpha,\beta}}_v\right].
\end{gather*}

Using the $(\mbC^*)^2\times(\mbC^*)^r$-action it is easy to get a cell decomposition of the varieties $\mcM(r,n)^{\Gamma^{\vec\theta}_{\alpha,\beta}}_v$ and $\mcM(r,n)^{T^{\vec\theta}_{\alpha,\beta}}_{v,k}$. Therefore
\begin{align*}
&\left[\mcM(r,n)^{\Gamma^{\vec\theta}_{\alpha,\beta}}_v\right]=\left.P_q^{BM}\left(\mcM(r,n)^{\Gamma^{\vec\theta}_{\alpha,\beta}}_v\right)\right|_{q=\mbL},\\
&\left[\mcM(r,n)^{T^{\vec\theta}_{\alpha,\beta}}_{v,k}\right]=\left.P_q\left(\mcM(r,n)^{T^{\vec\theta}_{\alpha,\beta}}_{v,k}\right)\right|_{q=\mbL}.\\
\end{align*}
The theorem is proved.

\section{Proof of Theorem \ref{theorem:quasihomogeneous}}\label{section:quasihomogeneous}

In this section we prove Theorem~\ref{theorem:quasihomogeneous}. First of all, in Section~\ref{power structure} we remind the reader a notion of a power structure over the Grothendieck ring $K_0(\nu_{\mbC})$. This technique allows us to simplify some combinatorial computations. Then in Section~\ref{subsection:cores and quotients} we review standard combinatorial constructions related to Young diagrams. In Section~\ref{subsection:Hilbert and quivers} we review a connection between Hilbert schemes and quiver varieties and do an important step in the proof of Theorem~\ref{theorem:quasihomogeneous}. Instead of considering the $T_{\alpha,\beta}$-fixed point set in the Hilbert scheme $(\mbC^2)^{[n]}$, we first look at the fixed point set of a finite subgroup of $T_{\alpha,\beta}$. Finally, in Section~\ref{subsection:proof of theorem} we combine everything and prove the theorem.   

\subsection{Power structure over $K_0(\nu_{\mbC})$}\label{power structure}

In \cite{Gusein-Zade2} there was defined a notion of a power structure over a ring and there was described a natural power structure over the Grothendieck ring $K_0(\nu_\mbC)$. This means that for a series $A(t)=1+a_1t+a_2t^2+\ldots\in 1 + t\cdot K_0(\nu_\mbC)[[t]]$ and for an element $m\in K_0(\nu_\mbC)$ one defines a series $(A(t))^m \in 1 + t\cdot K_0(\nu_\mbC)[[t]]$ so that all the usual properties of the exponential function hold. We also need the following property of this power structure. For any $i\ge 1$ and $j\ge 0$ we have (see e.g.\cite{Gusein-Zade2})
\begin{gather}\label{Lpower}
(1-\mbL^jt^i)^\mbL=1-\mbL^{j+1}t^i.
\end{gather}

\subsection{Cores and quotients}\label{subsection:cores and quotients}

In this section we review the well known construction of an $m$-core and an $m$-quotient of a Young diagram.

The set $Core_m$ is defined as the set of Young diagrams $Y$ such that for any box $s\in Y$ we have $l_Y(s)+a_Y(s)+1\not\equiv 0\mmod m$. For a Young diagram $Y$ let 
$$
w_i(Y)=\sharp\{(p,q)\in Y|p+q\equiv i\mmod m\}.
$$
We remind the reader that we consider a Young diagram as a subset of $\mbZ_{\ge 0}^2$. Let 
$$
\Pi^{m-1}=\left\{\lambda=(\lambda_0,\lambda_1,\ldots,\lambda_{m-1})\in\mbZ^m\left|\sum_{k=0}^{m-1}\lambda_k=0\right.\right\}.
$$
Define the map $\Psi\colon Core_m\to \Pi^{m-1}$ by 
$$
Core_m\ni Y\mapsto (\lambda_0,\lambda_1,\ldots,\lambda_{m-1}), \lambda_i=w_{i+1}(Y)-w_i(Y).
$$ 
The map $\Psi$ is a bijection (see e.g.\cite{JK},Ch.2.7). 

There is also a bijection (see e.g.\cite{JK},Ch.2.7.)
$$
\Phi\colon\mathcal Y\to Core_m\times \mathcal Y^m, \Phi(Y)=(\Phi(Y)_0,\Phi(Y)_1,\ldots,\Phi(Y)_m).
$$ 
We don't give a construction of this map, we will only list all necessary properties. The diagram $\Phi(Y)_0$ is called the $m$-core of the diagram $Y$ and the $m$-tuple $(\Phi(Y)_1,\Phi(Y)_2,\ldots,\Phi(Y)_m)$ is called the $m$-quotient. The bijection $\Phi$ has the following properties (see e.g.\cite{JK},Ch.2.7.):
\begin{align}
& |Y|=|\Phi(Y)_0|+m\sum_{i=1}^m|\Phi(Y)_i|;\label{first property}\\
& w_i(Y)=w_i(\Phi(Y)_0)+\sum_{i=1}^m|\Phi(Y)_i|;\label{second property}\\
& \sharp\{s\in Y|l_Y(s)+a_Y(s)+1\equiv 0\mmod m\}=\sum_{i=1}^m|\Phi(Y)_i|.\label{third property}
\end{align} 

\subsection{Hilbert schemes and quiver varieties}\label{subsection:Hilbert and quivers}

For an ideal $I\subset\mbC[x,y]$ of codimension $n$ let $V(I)=\mbC[x,y]/I$ and $B_1,B_2\in GL(V(I))$ be the operators of the multiplications by $x$ and $y$ correspondingly. Let $i\colon\mbC\to V(I)$ be the linear map that sends $1\in\mbC$ to the unit in $\mbC[x,y]$. Define the map $f\colon (\mbC^2)^{[n]}\to\mcM(1,n)$ by $I\mapsto [(B_1,B_2,i,0)]$. This map is an isomorphism (see e.g.\cite{Nakajima}).

For integers $\mu$ and $\nu$ let $\Gamma_{\nu,\mu}$ be the finite subgroup of $(\mbC^*)^2$ defined by 
$$
\Gamma_{\nu,\mu}=\left\{(\zeta^{j\nu},\zeta^{j\mu})\in(\mbC^*)^2\left|\zeta=\exp\left(\frac{2\pi i}{m}\right)\right.\right\}.
$$
It is clear that the isomorphism $f$ transforms the $T_{\alpha,\beta}$-action on $(\mbC^2)^{[n]}$ to the $T^{\vec 0}_{\alpha,\beta}$-action on $\mcM(1,n)$ and the $\Gamma_{\alpha,\beta}$-action to the $\Gamma^{\vec 0}_{\alpha,\beta}$-action. Thus, by Lemma~\ref{lemma:finite group}, we have
\begin{align}
&\left((\mbC^2)^{[n]}\right)^{\Gamma_{\alpha,\beta}}=\coprod_{\substack{v\in\mbZ_{\ge 0}^m\\\sum v_i=n}}\mfM(v,e_0),\notag\\
&\left((\mbC^2)^{[n]}\right)^{T_{\alpha,\beta}}=\coprod_{\substack{v\in\mbZ_{\ge 0}^m\\\sum v_i=n}}\mfM(v,e_0)^{\widetilde T_{\alpha,\beta}},\label{formula:finite group2}
\end{align}
where by $e_0$ we denote the vector $(1,0,\ldots,0)\in\mbZ_{\ge 0}^m$. Until the end of this section we consider a quiver variety $\mfM(v,e_0)$ as a subset of $\mcM(1,\sum v_i)=(\mbC^2)^{[\sum v_i]}$.

The last factor $\mbC^*$ of the product $(\mbC^*)^2\times\mbC^*$ acts trivially on $\mcM(1,n)$, so now we start to consider only the $(\mbC^*)^2$-action on $\mcM(1,n)$. 

\subsection{Proof of Theorem \ref{theorem:quasihomogeneous}}\label{subsection:proof of theorem}
For a vector $v\in\mbZ_{\ge 0}^m$ let $|v|=\sum_{i=0}^{m-1}v_i$. By~\eqref{formula:finite group2} and Theorem~\ref{firsttheorem}, we have
$$
\sum_{n\ge 0}P_q\left(\left((\mbC^2)^{[n]}\right)^{T_{\alpha,\beta}}\right)t^n=\sum_{v\in\mbZ^m_{\ge 0}}q^{-\frac{1}{2}\dim\mfM(v,e_0)}P^{BM}_q\left(\mfM(v,e_0)\right)t^{|v|}.
$$
If the variety $\mfM(v,e_0)$ is nonempty, then (see e.g.\cite{Nakajima2})
\begin{gather}\label{dimension formula}
\dim\mfM(v,e_0)=2v_0-\sum_{i=0}^{m-1}(v_i-v_{i+1})^2.
\end{gather}
Here we follow the convention $v_m=v_0$. For $\lambda\in\Pi^{m-1}$ let
\begin{align*}
&v_0(\lambda)=\frac{1}{2}\sum_{k=0}^{m-1}\lambda_k^2,\\
&n(\lambda)=mv_0(\lambda)+\sum_{k=0}^{m-2}(m-1-k)\lambda_k.
\end{align*}
Using these notations and formula \eqref{dimension formula} we get
\begin{align*}
&\sum_{v\in\mbZ^m_{\ge 0}}q^{-\frac{1}{2}\dim\mfM(v,e_0)}P^{BM}_q\left(\mfM(v,e_0)\right)t^{|v|}=\\
&=\sum_{\lambda\in\Pi^{m-1}}t^{n(\lambda)}\sum_{\substack{v\in\mbZ^m_{\ge 0}\\v_{i+1}-v_i=\lambda_i}}P^{BM}_q\left(\mfM(v,e_0)\right)\left(q^{-\frac{1}{m}}t\right)^{m(v_0-v_0(\lambda))}.
\end{align*}

\begin{lemma}\label{important lemma}
For any $\lambda\in\Pi^{m-1}$ we have
\begin{gather*}
\sum_{\substack{v\in\Z^m_{\ge 0}\\v_{i+1}-v_i=\lambda_i}}P^{BM}_q\left(\mfM(v,e_0)\right)t^{|v|}=\frac{t^{n(\lambda)}}{\prod\limits_{i\ge 1}(1-q^it^{mi})^{m-1}(1-q^{i+1}t^{mi})}.
\end{gather*}
\end{lemma}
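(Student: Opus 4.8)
The plan is to compute the left-hand side by summing the Poincar\'e polynomials of all the quiver varieties $\mfM(v,e_0)$ whose dimension vector $v$ has the prescribed successive differences $v_{i+1}-v_i=\lambda_i$. The key observation is that fixing these differences leaves exactly one free parameter, namely $v_0$, and that increasing $v_0$ by one unit increases $|v|$ by $m$ and corresponds to adding a ``full orbit'' of the cyclic quiver. So I would first pin down the minimal admissible $v$ for a given $\lambda$: among all $v\in\Z_{\ge 0}^m$ with $v_{i+1}-v_i=\lambda_i$ there is a smallest one, and a short computation (using that $\mfM(v,e_0)$ is nonempty iff $v$ is dominant in the appropriate sense) shows that $v_0$ attains its minimum $v_0(\lambda)=\tfrac12\sum\lambda_k^2$ precisely at that minimal vector, with $|v|=n(\lambda)$ there; this is where the exponent $n(\lambda)$ and the prefactor $t^{n(\lambda)}$ come from.

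Next I would invoke the known description of the Borel--Moore homology of cyclic quiver varieties. The generating function over $v_0$ of the $P^{BM}_q(\mfM(v,e_0))$ at fixed $\lambda$ should, after normalizing by $t^{n(\lambda)}$, be independent of $\lambda$ and equal to a universal series. Concretely I expect $P^{BM}_q(\mfM(v,e_0))$ to be a product of $q$-binomial coefficients or, more usefully, that the relevant generating series factors as an infinite product. The target denominator $\prod_{i\ge 1}(1-q^it^{mi})^{m-1}(1-q^{i+1}t^{mi})$ strongly suggests that the correct intermediate step is to recognize $P^{BM}_q(\mfM(v,e_0))$, summed over the free direction, as counting something like partitions into parts of $m$ colors with one distinguished color carrying an extra power of $q$. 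The factor $t^{mi}$ confirms that each unit of $v_0$ above the minimum contributes a block of $m$ boxes, matching property~\eqref{first property} of the $m$-quotient.

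The cleanest route is to identify the sum over the free parameter with the class, in $K_0(\nu_{\mbC})$, of a symmetric-product-type construction and then apply the power structure from Section~\ref{power structure}. Using the relation~\eqref{Lpower}, $(1-\mbL^jt^i)^\mbL=1-\mbL^{j+1}t^i$, one converts a single factor into the shifted factor needed to build up the exponents $q^i$ and $q^{i+1}$ in the denominator. I would write the generating series as a power $(A(t))^{[\text{fiber}]}$ for an explicit $A$ and an explicit class, then expand using the standard properties of the power structure; specializing $\mbL\mapsto q$ at the end recovers $P^{BM}_q$ from the motivic class, which is legitimate because the varieties involved admit cell decompositions (as used at the end of the proof of Theorem~\ref{firsttheorem}), so their classes are polynomials in $\mbL$ and the specialization is unambiguous by the injectivity of $\theta$.

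The main obstacle, I expect, is establishing the precise form of $P^{BM}_q(\mfM(v,e_0))$ as a function of $v$ and then executing the summation over $v_0$ at fixed $\lambda$ to land exactly on the stated denominator. The geometry (cell decomposition, compactness of fixed loci) guarantees the Poincar\'e polynomials are well-behaved, but matching the $(m-1)$-fold factor $(1-q^it^{mi})^{m-1}$ against the single factor $(1-q^{i+1}t^{mi})$ requires keeping careful track of which residue class modulo $m$ a box contributes to, i.e.\ of the weights $w_i$. I anticipate that the asymmetry between the two types of factors reflects the asymmetry between the $m-1$ ``off-diagonal'' weight shifts and the one distinguished shift coming from the framing $e_0$ sitting at vertex $0$, and that carefully bookkeeping this via the bijection with cores and quotients of Section~\ref{subsection:cores and quotients} will close the argument.
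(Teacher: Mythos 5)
Your proposal assembles the right raw materials (cores and quotients, the power structure with \eqref{Lpower}, the specialization $q=\mbL$ justified by cell decompositions, and the identification of the lowest term $t^{n(\lambda)}$), but it stops short of the actual computation: every step where the infinite product is supposed to appear is phrased as ``I expect'', ``should'', or ``strongly suggests'', and no mechanism is offered that would produce it. There is no off-the-shelf formula for $P^{BM}_q\left(\mfM(v,e_0)\right)$ that you can invoke termwise and then sum over $v_0$; indeed, the route you sketch --- write each Poincar\'e polynomial as a sum over Young diagrams and then do the bookkeeping of residue classes of boxes --- is exactly the combinatorial approach that the introduction of the paper says does not work (it produces the ``very nontrivial combinatorial identities'' of Theorem~\ref{first identity}, which are obtained as a \emph{corollary} of the main theorem, not an ingredient of its proof). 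The missing idea is geometric: viewing $\mfM(v,e_0)$ inside $(\mbC^2)^{[|v|]}$ and using the projective morphism $\pi\colon\mcM(1,n)\to S^n(\mbC^2)$, one splits each point according to whether its support lies on the invariant axis $\mbC_x$ or off it. Since $\Gamma_{1,-1}$ acts freely on $\mbC^2\setminus\mbC_x$, the purely off-axis contributions force $v_0=v_1=\cdots=v_{m-1}$, and the generating series factors as $H_{\mbC^2,\lambda}(t)=H_{\mbC_x,\lambda}(t)\,H_{\mbC^2\setminus\mbC_x,\vec 0}(t)$, formula \eqref{product formula}. The two factors are then computed by genuinely different arguments: the axis part by a Bialynicki--Birula decomposition with respect to $T_{1,-\gamma}$, $\gamma\gg 1$, whose cells are indexed by Young diagrams $Y$ with $w_i(Y)=v_i$, yielding via the core/quotient bijection $H_{\mbC_x,\lambda}(t)=t^{n(\lambda)}\prod_{i\ge 1}(1-\mbL^it^{mi})^{-m}$ (Lemma~\ref{lemma:middle step} combined with Lemma~\ref{core lemma}); and the off-axis part by the result of \cite{Gusein-Zade} together with the power structure, $H_{\mbC^2\setminus\mbC_x,\vec 0}(t)=H_O(t^m)^{[\mbC^2\setminus\mbC_x]}=\prod_{i\ge 1}\frac{1-\mbL^it^{mi}}{1-\mbL^{i+1}t^{mi}}$. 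The asymmetric denominator $(1-q^it^{mi})^{m-1}(1-q^{i+1}t^{mi})$ is not obtained by tracking which residue class each box contributes to, as you anticipate; it arises from the cancellation between the $m$-fold factor of the axis part and the numerator of the off-axis part.

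A secondary inaccuracy occurs in your first paragraph: the minimal admissible $v_0$ is not characterized by positivity of $v$. Among $v\in\Z^m_{\ge 0}$ with $v_{i+1}-v_i=\lambda_i$, the smallest $v_0$ is $-\min_i\sum_{k<i}\lambda_k$, a piecewise-linear (not quadratic) function of $\lambda$, and for such $v$ the variety $\mfM(v,e_0)$ is typically empty (e.g.\ $m=2$, $\lambda=(2,-2)$ gives $v=(0,2)$, but stability forces $\mfM((0,2),e_0)=\emptyset$). What you actually need is that the nonempty $\mfM(v,e_0)$ with the prescribed differences are exactly those with $v=w(Y)$ for $Y$ a Young diagram with $m$-core $Y_\lambda$, the minimal one being $Y_\lambda$ itself, where the variety is a single point; this is precisely the content of Lemma~\ref{core lemma}. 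Without that lemma and without the support decomposition \eqref{product formula}, your outline does not close.
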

Before a proof of this lemma we introduce a new notation and prove two useful lemmas. 

In the proof of Lemma~\ref{lemma:decomposition} we used the morphism $\pi\colon\mcM(r,n)\to\mcM_0(r,n)$. We have $\mcM_0(1,n)=S^n(\mbC^2)$ (see e.g.\cite{Nakajima}). Slightly changing notations we denote now by $\pi$ the morphism $\mcM(1,n)\to S^n(\mbC^2)$. It can be described explicitly as follows. Let $[(B_1,B_2,i,j)]\in\mcM(1,n)$. We can make $B_1$ and $B_2$ simultaneously into upper triangular matrices with numbers $\lambda_i$ and $\mu_i$ on the diagonals. The morphism $\pi$ is given by $\pi(B_1,B_2,i,j)=\{(\lambda_1,\mu_1),\ldots,(\lambda_n,\mu_n)\}$ (see e.g.\cite{Nakajima}).

It is useful to note that the subgroups $\Gamma_{\alpha,\beta}$ and $\Gamma_{1,-1}$ of $(\mbC^*)^2$ coincide, therefore
$$
\mcM(1,n)^{\Gamma_{\alpha,\beta}}=\mcM(1,n)^{\Gamma_{1,-1}}.
$$
For any $\Gamma_{1,-1}$-invariant subset $Z\subset\mbC^2$ and any vector $\lambda\in\Pi^{m-1}$ let
$$
H_{Z,\lambda}(t)=\sum_{\substack{v\in\Z^m_{\ge 0}\\v_{i+1}-v_i=\lambda_i}}\left[\mfM(v,e_0)\cap\pi^{-1}(S^{|v|}Z)\right]t^{|v|}.
$$ 
We denote by $\mbC_x$ the $x$-axis in the plane $\mbC^2$.

\begin{lemma}\label{lemma:middle step}
For any $\lambda\in\Pi^{m-1}$ consider the unique diagram $Y_\lambda\in Core_m$ such that $\Psi(Y_\lambda)=\lambda$. Then we have
\begin{gather}\label{middle step}
H_{\mbC_x,\lambda}(t)=\frac{t^{|Y_\lambda|}}{\prod\limits_{i\ge 1}(1-\mbL^it^{mi})^m}.
\end{gather}
\end{lemma}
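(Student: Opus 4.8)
The plan is to compute the class $H_{\mbC_x,\lambda}(t)$ by stratifying the $x$-axis as $\mbC_x=\{0\}\sqcup\mbC^*$ and factoring the generating series into a contribution of ideals supported at the origin and a contribution of ideals supported on the free part $\mbC^*$. The key structural observation is that $\Gamma_{1,-1}$ acts on $\mbC_x$ by $x\mapsto\zeta x$, so that the origin is fixed while every other orbit is free of size $m$. An ideal supported on $\mbC_x$ splits, through its primary decomposition, into a piece at the origin and a piece spread over finitely many free orbits, and away from the origin each orbit carries the local datum of a single smooth point of $\mbC^2$. A free orbit of local colength $d$ yields, as a $\Gamma_m$-module, $d$ copies of the regular representation, hence contributes $d$ to every weight space $V_k$ and therefore leaves the differences $v_{k+1}-v_k=\lambda_k$ unchanged. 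Thus the residue vector $\lambda$ is carried entirely by the origin, and $H_{\mbC_x,\lambda}(t)=O_\lambda(t)\cdot F(t)$, where $O_\lambda$ records origin-supported ideals of residue $\lambda$ and $F$ records the free orbits, independently of $\lambda$.

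First I would compute $F(t)$ using the power structure over $K_0(\nu_{\mbC})$. The motivic generating series of punctual ideals at a smooth point of $\mbC^2$ is $\sum_{d\ge0}[\mathrm{Hilb}^d_0(\mbC^2)]s^d=\prod_{i\ge1}(1-\mbL^{i-1}s^i)^{-1}$; since each orbit has size $m$, a local colength $d$ becomes a total colength $md$, so I substitute $s=t^m$. The orbits are parametrized by $\mbC^*/\Gamma_m\cong\mbC^*$, of class $\mbL-1$, so by multiplicativity of the power structure $F(t)=\bigl(\prod_{i\ge1}(1-\mbL^{i-1}t^{mi})^{-1}\bigr)^{\mbL-1}$. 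Applying \eqref{Lpower} in the form $(1-\mbL^{i-1}t^{mi})^{\mbL}=1-\mbL^{i}t^{mi}$ gives $(1-\mbL^{i-1}t^{mi})^{\mbL-1}=(1-\mbL^{i}t^{mi})/(1-\mbL^{i-1}t^{mi})$, whence $F(t)=\prod_{i\ge1}\frac{1-\mbL^{i-1}t^{mi}}{1-\mbL^{i}t^{mi}}$.

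The remaining, and main, task is to show that the origin contribution equals $O_\lambda(t)=t^{|Y_\lambda|}\prod_{i\ge1}\bigl((1-\mbL^{i}t^{mi})^{m-1}(1-\mbL^{i-1}t^{mi})\bigr)^{-1}$; once this is established, multiplying by $F(t)$ cancels the factor $(1-\mbL^{i-1}t^{mi})$ and upgrades $(1-\mbL^{i}t^{mi})^{m-1}$ to $(1-\mbL^{i}t^{mi})^{m}$, yielding \eqref{middle step}. For $O_\lambda$ I would put a Bialynicki--Birula cell decomposition on the $\Gamma_m$-fixed punctual Hilbert scheme at the origin via the residual torus action, exactly as in Lemma \ref{lemma:decomposition}: its cells are indexed by the torus-fixed (monomial) ideals supported at the origin, i.e.\ by Young diagrams. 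By the $m$-core/$m$-quotient bijection $\Phi$ together with properties \eqref{first property}--\eqref{third property}, the monomial ideals of residue vector $\lambda$ are precisely those with $m$-core the distinguished diagram $Y_\lambda$ satisfying $\Psi(Y_\lambda)=\lambda$, with arbitrary $m$-quotient, and such a $Y$ has colength $|Y_\lambda|+m\sum_i|\Phi(Y)_i|$; this accounts for the prefactor $t^{|Y_\lambda|}$ and the powers $t^{mi}$. The cell dimensions are then extracted from the tangent weights \eqref{weight decomposition}, and summing $\mbL^{\dim}$ over all such diagrams, reorganized along the $m$ coordinates of the quotient, produces the stated product.

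I expect this last step --- the exact $\mbL$-bookkeeping of the attracting-cell dimensions and their resummation via the $m$-quotient into the factor $(1-\mbL^{i}t^{mi})^{m-1}(1-\mbL^{i-1}t^{mi})$ --- to be the main obstacle, since it requires matching the cell dimension at each monomial ideal with the combinatorics of \eqref{first property}--\eqref{third property}, rather than the cruder point count obtained by setting $\mbL=1$ (which already reproduces $t^{|Y_\lambda|}\prod_i(1-t^{mi})^{-m}$ and serves as a useful consistency check). By contrast, the factorization into origin and free parts and the power-structure evaluation of $F(t)$ are routine.
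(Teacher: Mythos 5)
Your factorization $H_{\mbC_x,\lambda}(t)=O_\lambda(t)\cdot F(t)$ and the power-structure evaluation of the free part $F(t)$ are sound, and your two claimed factors do multiply to \eqref{middle step}; but the proposal has a genuine gap exactly at the step you defer as the ``main obstacle'': the origin contribution $O_\lambda(t)$ is never computed, and the tools you invoke cannot compute it. If you run a Bialynicki--Birula argument over the origin, the relevant cocharacter has both weights positive, and then only one member of each tangent-weight pair in \eqref{small weight decomposition} can be repelling \emph{when} $a_Y(s)\ge 1$; the cell dimension at the monomial ideal $Y$ is the refined statistic $\sharp\{s\in Y\mid l_Y(s)+a_Y(s)+1\equiv 0\bmod m,\ a_Y(s)\ge 1\}$, not the full count of boxes with hook length divisible by $m$. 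Properties \eqref{first property}--\eqref{third property} say nothing about how this refined statistic distributes over the $m$-quotient; to obtain your claimed product $t^{|Y_\lambda|}\prod_{i\ge1}\bigl((1-\mbL^{i}t^{mi})^{m-1}(1-\mbL^{i-1}t^{mi})\bigr)^{-1}$ one needs a finer core--quotient lemma (that among the boxes with $m\mid l_Y(s)+a_Y(s)+1$, those with $a_Y(s)=0$ correspond precisely to the column-tops of one distinguished quotient component), which you neither state nor prove. So the proposal reduces the lemma to an unproven combinatorial claim essentially as hard as the lemma itself. A secondary point: one cannot literally ``put a BB cell decomposition on the $\Gamma_m$-fixed punctual Hilbert scheme,'' since the punctual fiber is singular; BB must be applied to the smooth $\mfM(v,e_0)$, with the punctual locus identified as the union of repelling cells via properness of $\pi$ (this is fixable, but it is glossed over).

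It is instructive to see how the paper's proof avoids your obstacle entirely: it chooses the one-parameter subgroup $T_{1,-\gamma}$, $\gamma\gg 1$, with weights of \emph{opposite} signs, for which the attracting locus inside $\mfM(v,e_0)$ is the part lying over the whole $x$-axis rather than over the origin. With this choice exactly one member of each tangent-weight pair $t_1^{-l_Y(s)}t_2^{a_Y(s)+1},\ t_1^{l_Y(s)+1}t_2^{-a_Y(s)}$ is attracting, so the cell dimension at $Y$ is the unrefined count $\sharp\{s\in Y\mid l_Y(s)+a_Y(s)+1\equiv 0\bmod m\}$, which is precisely \eqref{third property}; summing $\mbL^{\dim}t^{|Y|}$ over diagrams with $m$-core $Y_\lambda$ then factors immediately into $t^{|Y_\lambda|}\prod_{i\ge 1}(1-\mbL^{i}t^{mi})^{-m}$. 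In other words, by \emph{not} stratifying $\mbC_x$ into $\{0\}\sqcup\mbC^*$, the paper trades your two factors $O_\lambda\cdot F$ for a single cell count matched to the combinatorics already available, and needs neither the power structure nor any refined core--quotient property for this lemma.
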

\begin{proof}
The set of fixed points of the $(\mbC^*)^2$-action on $\mcM(1,n)$ is parametrized by the set of Young diagrams $Y$ such that $|Y|=n$. Let $p$ be the fixed point corresponding to a Young diagram $Y$, then, by \eqref{weight decomposition}, we have
\begin{gather}\label{small weight decomposition}
T_p(\mcM(1,n))=\sum_{s\in Y}\left(t_1^{-l_Y(s)}t_2^{a_Y(s)+1}+t_1^{l_Y(s)+1}t_2^{-a_Y(s)}\right).
\end{gather}
We choose $\gamma\gg 1$ and for each point $p\in\mfM(v,e_0)^{(\mbC^*)^2}$ we define the attracting set $C_p$ as follows 
$$
C_p=\{z\in\mfM(v,e_0)|\lim\nolimits_{t\to 0,t\in T_{1,-\gamma}}tz=p\}.
$$
Clearly, if $z\in \mbC_x$, then $\lim_{t\to 0,t\in T_{1,-\gamma}}tz=0$, and if $z\in\mbC^2\backslash\mbC_x$, then $tz$ goes to infinity. By \cite{B1,B2}, the sets $C_p$ form a cell decomposition of $\mfM(v,e_0)\cap\pi^{-1}(S^{|v|}\mbC_x)$. Using \eqref{small weight decomposition} we obtain
\begin{gather}\label{combinatorics}
\left[\mfM(v,e_0)\cap\pi^{-1}(S^{|v|}\mbC_x)\right]=\sum_{\substack{Y\in\mathcal Y\\w_i(Y)=v_i}}\mbL^{\sharp\{s\in Y|l_Y(s)+a_Y(s)+1\equiv 0\mmod m\}}.
\end{gather}
The formula \eqref{middle step} follows from \eqref{combinatorics} and properties \eqref{first property},\eqref{second property} and \eqref{third property}.
\end{proof}

\begin{lemma}\label{core lemma}
For any $Y\in Core_m$ we have $|Y|=n(\Psi(Y))$.
\end{lemma}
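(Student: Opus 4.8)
The plan is to first convert the identity into a single statement about the quantity $w_0(Y)$, and then to prove that statement from the abacus description of $m$-cores.

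I would begin by unwinding both sides in terms of the numbers $w_i=w_i(Y)$. With $\lambda=\Psi(Y)$ we have $\lambda_i=w_{i+1}(Y)-w_i(Y)$ under the cyclic convention $w_m=w_0$, hence $\sum_{k=0}^{m-1}\lambda_k=0$ and $w_i=w_0+\sum_{k=0}^{i-1}\lambda_k$. Summing over $i$ yields
$$|Y|=\sum_{i=0}^{m-1}w_i=m\,w_0+\sum_{k=0}^{m-1}(m-1-k)\lambda_k.$$
Since $\sum_k\lambda_k=0$, the $k=m-1$ term contributes nothing and the linear part of $n(\lambda)$ may be written over the full range, so that $n(\lambda)=m\,v_0(\lambda)+\sum_{k=0}^{m-1}(m-1-k)\lambda_k$. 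Comparing the two expressions, the lemma becomes equivalent to the single identity
$$w_0(Y)=v_0(\lambda)=\frac{1}{2}\sum_{k=0}^{m-1}\lambda_k^2.$$

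To prove this for $Y\in Core_m$ I would use the abacus (Maya diagram) encoding: record the boundary of $Y$ as a bi-infinite bead/gap sequence and split its positions into $m$ runners indexed by residues modulo $m$. The defining condition of $Core_m$, that no hook length is divisible by $m$, is precisely the condition that on every runner the beads are flush to one end; thus $Y$ is determined by its vector of runner charges, which one identifies with the components of $\lambda=\Psi(Y)$. Writing $|Y|$ as the number of bead/gap inversions and isolating those boxes that contribute to $w_0$, the count organizes runner by runner into contributions of the form $\binom{\lambda_k}{2}$, which collapse (using $\sum_k\lambda_k=0$) to $\frac{1}{2}\sum_k\lambda_k^2$. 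This is the classical size-of-a-core formula.

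The main obstacle is bookkeeping rather than anything conceptual: one must pin down the normalization of the runner charges so that they come out exactly as the components of $\Psi(Y)$, and then count the $w_0$-boxes carefully enough that the mixed terms cancel and only the diagonal quadratic $\frac{1}{2}\sum_k\lambda_k^2$ remains. If one prefers to avoid the abacus altogether, the same identity can be obtained by induction on $|Y|$ along the simple roots of $\Pi^{m-1}$: realizing $\lambda\mapsto\lambda+\alpha$ for a simple root $\alpha$ by adjoining a suitable border strip to the core changes $|Y|$ by an amount one checks to equal the change in $n(\lambda)$, with the empty diagram $\lambda=0$ as the trivial base case.
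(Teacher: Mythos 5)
Your proposal is correct, but it proves the key identity by a genuinely different route than the paper. Both arguments share the same opening reduction: writing $w_i=w_0+\sum_{k<i}\lambda_k$ and summing, the lemma is equivalent to the single identity $w_0(Y)=v_0(\Psi(Y))=\frac{1}{2}\sum_k\lambda_k^2$. The paper gets this identity from geometry rather than combinatorics: for $Y\in Core_m$ it considers the quiver variety $\mfM(w(Y),e_0)$, notes that the properties of $\Phi$ force it to have a unique $(\mbC^*)^2$-fixed point, and that its Bialynicki--Birula decomposition therefore consists of a single cell, whose dimension vanishes because a core has no box with hook length divisible by $m$; hence $\mfM(w(Y),e_0)$ is a point, and setting \eqref{dimension formula} equal to zero gives exactly $2w_0(Y)=\sum_k\lambda_k^2$. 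The paper's proof is thus essentially free given the machinery already deployed there (fixed-point combinatorics, the Bialynicki--Birula decomposition, the dimension formula), and it needs no abacus bookkeeping; your proof is elementary and self-contained, makes the lemma independent of quiver varieties, and the identity you reduce to is a true, classical fact about cores, so the strategy is sound.

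Two remarks on your combinatorial step. First, the one real soft spot: in the bead/gap (inversion) picture, a box corresponds to a pair $(b,a)$ with $b$ a gap, $a$ a bead, $b<a$, but the content of that box is \emph{not} a function of the runners of $a$ and $b$ alone --- it also involves how the hook $a-b$ splits into arm and leg. For instance, for $m=2$ and $Y=(3,2,1)$ all six boxes arise from the single runner pair (gap on runner $1$, bead on runner $0$), yet they carry both residues. So "isolating the boxes contributing to $w_0$" cannot be organized by runner pairs as literally described. The clean elementary count goes through diagonal lengths instead: $w_0=\sum_{l\in\Z}d_{ml}$, each $d_c$ is read off the abacus as $\sharp\{a\in S\mid a\ge c\}$ minus an explicit correction, and summing over runners \emph{before} summing over $l$ (this is where $\sum_k\lambda_k=0$ cancels the divergent parts) yields $\sum_k\binom{\lambda_k}{2}=\frac{1}{2}\sum_k\lambda_k^2$, exactly the collapse you predicted. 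Second, your fallback induction is in fact the cleanest fix and does work: adding a simple root to the charge vector is realized on the abacus by moving a single bead from the top of one runner to the first gap of an adjacent runner, which keeps every runner flush and changes the core by a single border strip whose length is the displacement of the bead; comparing that length with $n(\lambda+\alpha)-n(\lambda)$ completes the induction from the empty diagram.
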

\begin{proof}
Consider the quiver variety $\mfM(w(Y),e_0)$. From the properties of the bijection $\Phi$ it follows that if $Y'$ is a Young diagram such that $|Y'|=|Y|$ and $w(Y)=w(Y')$, then $Y'=Y$. Thus, the $(\mbC^*)^2$-fixed point set in $\mfM(w(Y),e_0)$ consists of only one point. Using the Bialynicki-Birula theorem we can construct a cell decomposition of $\mfM(w(Y),e_0)$ and it is easy to see that the unique cell has dimension~$0$. Therefore, $\mfM(w(Y),e_0)$ is just a point. By~\eqref{dimension formula}, $w_0(Y)=v_0(\Psi(Y))$ and clearly $|Y|=n(\Psi(Y))$.  
\end{proof}

\begin{proof}[Proof of Lemma  \ref{important lemma}]
For $\lambda=\vec 0$ this lemma was proved in \cite{Gusein-Zade}. 

Since $[\mfM(v,e_0)]=\left.P^{BM}_q(\mfM(v,e_0))\right|_{q=\mbL}$, it is sufficient to prove that
\begin{gather*}
\sum_{\substack{v\in\Z^m_{\ge 0}\\v_{i+1}-v_i=\lambda_i}}\left[\mfM(v,e_0)\right]t^{|v|}=\frac{t^{n(\lambda)}}{\prod\limits_{i\ge 1}(1-\mbL^it^{mi})^{m-1}(1-\mbL^{i+1}t^{mi})}.
\end{gather*}

The $\Gamma_{1,-1}$-action on $\mbC^2\backslash \mbC_x$ is free. Therefore, if the intersection of $\mfM(v,e_0)$ with $\pi^{-1}(S^{|v|}(\mbC^2\backslash\mbC_x))$ is nonempty, then $v_0=v_1=\ldots=v_{m-1}$. We get 
\begin{gather}\label{product formula}
H_{\mbC^2,\lambda}(t)=H_{\mbC_x,\lambda}(t)H_{\mbC^2\backslash\mbC_x,\vec 0}(t).
\end{gather}
We denote by $O$ the origin of $\mbC^2$. Let
$$
H_O(t)=\sum_{n\ge 0}[\mcM(1,n)\cap\pi^{-1}(S^n(O))]t^n.
$$
From \cite{Gusein-Zade} (see Theorem 1) it follows that  
\begin{gather*}
H_{\mbC^2\backslash\mbC_x,\vec 0}(t)=H_O(t^m)^{[(\mbC^2\backslash\mbC_x)/\Gamma_{1,-1}]}.
\end{gather*}
It is easy to check that $[(\mbC^2\backslash\mbC_x)/\Gamma_{1,-1}]=\mbL^2-\mbL$. Therefore we have
\begin{gather}
H_{\mbC^2\backslash\mbC_x,\vec 0}(t)=\left(\prod_{i\ge 1}\frac{1}{(1-\mbL^{i-1}t^{mi})}\right)^{\mbL^2-\mbL}\stackrel{\text{by \eqref{Lpower}}}{=}\prod_{i\ge 1}\frac{1-\mbL^it^{mi}}{1-\mbL^{i+1}t^{mi}}.\label{gusein-zade formula}
\end{gather}

If we combine formulas \eqref{middle step}, \eqref{product formula} and \eqref{gusein-zade formula} and also Lemma~\ref{core lemma}, we get the proof of the lemma.
\end{proof}

Using Lemma~\ref{important lemma} we get
\begin{align*}
&\sum_{\lambda\in\Pi^{m-1}}t^{n(\lambda)}\sum_{\substack{v\in\mbZ^m_{\ge 0}\\v_{i+1}-v_i=\lambda_i}}P^{BM}_q\left(\mfM(v,e_0)\right)\left(q^{-\frac{1}{m}}t\right)^{m(v_0-v_0(\lambda))}=\\
&=\left(\prod\limits_{i\ge 1}\frac{1}{(1-t^{mi})^{m-1}(1-qt^{mi})}\right)\left(\sum_{\lambda\in\Pi^{m-1}}t^{n(\lambda)}\right).
\end{align*}
By Lemma \ref{core lemma}, $\sum_{\lambda\in\Pi^{m-1}}t^{n(\lambda)}=\sum_{Y\in Core_m}t^{|Y|}$. We have (see e.g.\cite{Gusein-Zade})
$$
\sum_{Y\in Core_m}t^{|Y|}=\prod_{i\ge 1}\frac{(1-t^{mi})^m}{(1-t^i)}.
$$
This completes the proof of the theorem.

\section{Proofs of Theorems \ref{first identity} and \ref{second identity}}\label{section:combinatorial identities}

Here we prove two combinatorial identities from Section \ref{subsection:combinatorial identities}.

\subsection{Proof of Theorem \ref{first identity}}

Consider the $(\mbC^*)^2$-action on $\left((\mbC^2)^{[n]}\right)^{T_{\alpha,\beta}}$. Let $p\in(\mbC^2)^{[n]}$ be the fixed point corresponding to a Young diagram~$Y$. By \eqref{weight decomposition}, the weight decomposition of $T_p(((\mbC^2)^{[n]})^{T_{\alpha,\beta}})$ is given by 
\begin{align}
&T_p(((\mbC^2)^{[n]})^{T_{\alpha,\beta}})=\notag\\
&=\sum_{\substack{s\in Y\\ \alpha(l_Y(s)+1)=\beta a_Y(s)}}t_1^{l_Y(s)+1}t_2^{-a_Y(s)}+\sum_{\substack{s\in Y\\ \alpha l_Y(s)=\beta(a_Y(s)+1)}}t_1^{-l_Y(s)}t_2^{a_Y(s)+1}.\label{alpha,beta decomposition}
\end{align}
Let $\gamma$ be a big positive integer $\gamma$. By \cite{B1,B2}, the variety $\left((\mbC^2)^{[n]}\right)^{T_{\alpha,\beta}}$ has a cellular decomposition with the cells $C_p=\left\{\left.z\in\left((\mbC^2)^{[n]}\right)^{T_{\alpha,\beta}}\right|\lim_{t\to 0,t\in T_{1,\gamma}}tz=p\right\}$. By \eqref{alpha,beta decomposition}, we have $\dim C_p=\sharp\{s\in Y|\alpha l_Y(s)=\beta(a_Y(s)+1)\}$. Thus, we have
\begin{gather*}
\sum_{n\ge 0}P_q(((\mbC^2)^{[n]})^{T_{\alpha,\beta}})t^n=\sum_{Y\in\mcY}q^{\sharp\{s\in Y|\alpha l_Y(s)=\beta(a_Y(s)+1)\}}t^{|Y|}.
\end{gather*}
Now Theorem \ref{first identity} follows from Theorem \ref{theorem:quasihomogeneous}.

\subsection{Proof of Theorem \ref{second identity}}
In \cite{Iarrobino} it is proved that the set of irreducible components of the variety $((\mbC^2)^{[n]})^{T_{1,1}}$ is parametrized by partitions $\lambda$ such that $\frac{\lambda_1(\lambda_1-1)}{2}+|\lambda|=n$. The Poincar\'e polynomial of the irreducible component corresponding to a partition $\lambda$ is equal to~(see~\cite{Iarrobino})
$$
\prod_{i\ge 1}\genfrac[]{0pt}{}{\lambda_i-\lambda_{i+2}+1}{\lambda_{i+1}-\lambda_{i+2}}_q.
$$
Combining this fact with Theorem \ref{theorem:quasihomogeneous} we get the proof of Theorem~\ref{second identity}.

\section{Proof of Theorem \ref{theorem:irreducible components}}\label{section:irreducible components}

Let $\alpha+\beta=m$. Similar to Lemma \ref{lemma:finite group} we have the decomposition
\begin{gather}
\mcM(r,n)^{\Gamma_{\alpha,\beta}^{\vec\omega}}=\coprod_{\substack{v\in\mbZ_{\ge 0}^m\\|v|=n}}\mfM(v,\vec\mu),\label{formula:decom}
\end{gather}
and the $T^{\vec\omega}_{\alpha,\beta}$-action on the left-hand side of \eqref{formula:decom} corresponds to the $\widetilde T_{\alpha,\beta}$-action on the right-hand side. Using Theorem~\ref{firsttheorem} we get
$$
\sum_{n\ge 0}h_0\left(\mcM(r,n)^{T_{\alpha,\beta}^{\vec\omega}}\right)q^n=\sum_{v\in\mbZ^m_{\ge 0}}\dim H^{BM}_{\frac{1}{2}\dim\mfM(v,\vec\mu)}(\mfM(v,\vec\mu))q^{|v|}.
$$ 
In \cite{Nakajima1} it is proved that the space $\bigoplus_{v\in\mbZ^m_{\ge 0}}H^{BM}_{\frac{1}{2}\dim\mfM(v,\vec\mu)}(\mfM(v,\vec\mu))$ is an irreducible highest weight representation of $\widehat{sl}_m$ with the highest weight $\vec\mu$. This completes the proof of Theorem~\ref{theorem:irreducible components}.

\end{document}